\documentclass[a4paper,12pt]{article}
\usepackage{hyperref}
\hypersetup{
    colorlinks=false,
    linktoc=all,
    linkcolor=red,     %choose some color if you want links to stand out
    citecolor=blue,     %choose some color if you want links to stand out
}

\usepackage{amsmath,amssymb,amsfonts,amsthm,graphics,enumitem}
\usepackage{latexsym, bm}
\usepackage{multicol}
\usepackage{indentfirst}

\usepackage{cite}

\usepackage{graphicx}
\usepackage{caption}
\usepackage{textcomp}
\usepackage{tikz}
\usetikzlibrary{calc,arrows,positioning,shapes}
\usepackage{bigdelim}
\usepackage{nicematrix}
\usepackage{mathtools}
\usepackage{comment}
\usepackage{makecell}
\usetikzlibrary{decorations.pathreplacing}
\usetikzlibrary{intersections}
\usepackage{enumerate}
\usepackage{graphicx,booktabs,multirow}
\usepackage{appendix}

\newcommand{\keywords}[1]{%
  \par\medskip
  \noindent{\small\textbf{Keywords:} #1\par}
  \medskip
}

\newcommand{\msc}[1]{%
  \par\smallskip
  \noindent{\small\textbf{MSC:} #1\par}
  \medskip
}

\newcommand{\Ex}{\operatorname{Ex}}
\newcommand{\Exsp}{\operatorname{Ex}_{\mathrm{sp}}}
\newcommand{\Exssp}{\operatorname{Ex}_{\mathrm{ssp}}}

\newcommand{\ex}{\operatorname{ex}}
\newcommand{\exsp}{\operatorname{ex}_{\mathrm{sp}}}
\newcommand{\exssp}{\operatorname{ex}_{\mathrm{ssp}}}

\usepackage{setspace}
\newtheorem{theorem}{Theorem}[section]

\newtheorem{Observation}[theorem]{\rm\bfseries Observation}
\newtheorem{lemma}[theorem]{Lemma}

\newtheorem{Proof of Theorem 1.7.}[theorem]{Proof of Theorem 1.7.}

\newtheorem{definition}[theorem]{Definition}
\newcommand{\ignore}[1]{}
\begin{document}
\noindent
\begin{spacing}{1}

\title {The signless Laplacian spectral radius of graphs without disjoint cliques}
\date{}

\author{
Xinghui Zhao\footnote{	School of Mathematical Sciences, South China Normal University,  Guangzhou, 510631, P. R. China. E-mail: {\tt xhzhao@m.scnu.edu.cn}. } \;\;   \;\; Lihua You\footnote{Corresponding author. School of Mathematical Sciences, South China Normal University, Guangzhou, 510631, P. R. China. 
E-mail: {\tt ylhua@scnu.edu.cn}.} \;\;   \;\; Jing Zeng\footnote{College of Cryptology and Cyber Science, Nankai University, Tianjin, 300350, P. R. China. E-mail: \texttt{jingzeng@mail.nankai.edu.cn}.}
}
\maketitle
\begin{abstract}
A graph $G$ is $(t+1)K_{r+1}$-free if it contains no $t+1$ pairwise vertex-disjoint copies of $K_{r+1}$. 
Moon [Canad. J. Math. 20 (1968) 95-102] and Simonovits [Theory of Graphs (Proc. Colloq., Tihany, 1966)] independently determined that, for sufficiently large $n$, $K_{t}\vee T_{r}(n-t)$ is the unique $n$-vertex $(t+1)K_{r+1}$-free graph with the maximum number of edges.
In 2023, Ni, Wang and Kang [Electron. J. Combin. 30 (2023) \#P1.20] showed that the graph $K_{t}\vee T_{r}(n-t)$ is also the unique adjacency spectral extremal graph over all $n$-vertex $(t+1)K_{r+1}$-free graphs for sufficiently large $n$.
In this paper, for $r\geq 3$ and $t\geq 0$, we prove that $K_{t}\vee T_r(n-t)$ is the unique graph attaining the maximum signless Laplacian spectral radius among all $(t+1)K_{r+1}$-free graphs of sufficiently large order $n$.
\end{abstract}

\keywords{Signless Laplacian spectral radius; Disjoint cliques;  Spectral extremal graphs}
\msc{05C35, 05C50}

\section{Introduction}
As one of the central areas of graph theory, extremal graph theory has attracted considerable attention and has been extensively studied from various perspectives. A cornerstone of extremal graph theory is Tur\'an’s theorem \cite{Turan1941}, which states that the maximum number of edges in an $n$-vertex $K_{r+1}$-free graph is attained uniquely by the balanced complete $r$-partite graph. This graph is known as the Tur\'an graph and is denoted by $T_r(n)$. A classical Tur\'an-type problem asks for the maximum number of edges in an $n$-vertex graph that does not contain a given graph $F$ as a subgraph. In 2010, Nikiforov \cite{Nikispecturan} proposed the spectral version of this problem, also known as the spectral Tur\'an problem, which asks for the maximum spectral radius of an $F$-free graph of order $n$. A well-known result in this direction is Nikiforov's spectral Tur\'an theorem \cite{Nikituranthm}, which states that among all $K_{r+1}$-free graphs of order $n$, the Tur\'an graph $T_r(n)$ uniquely maximizes the spectral radius of its adjacency matrix. The corresponding result for the signless Laplacian spectral radius was later obtained by He, Jin and Zhang \cite{He}, who proved that, for $r\geq 3$, the Tur\'an graph $T_r(n)$ also uniquely maximizes the signless Laplacian spectral radius over all $K_{r+1}$-free graphs of order $n$. For various forbidden subgraphs, many advances have been made on the corresponding spectral extremal problems for both the adjacency matrix and the signless Laplacian matrix; see, e.g., \cite{ChenLiuZhangforest,SCioab,CioabDMevenC,friendship,XYuan1,XYuan2,XYuan3,WangZhaifan,ZhaiLinC6,ZhaiLinbook,ZhaiWangC4,ZhaoHGtri}. In this paper, we investigate the signless Laplacian spectral Tur\'an problem for $(t+1)K_{r+1}$-free graphs.

Throughout this paper, we consider only finite, undirected and simple graphs. The signless Laplacian matrix of a graph $G$, denoted by $Q(G)$, is defined as $Q(G)=A(G)+D(G)$, where $A(G)$ and $D(G)$ denote the adjacency matrix and the diagonal degree matrix of $G$, respectively. For a graph $F$, let $\Ex(n,F)$, $\Exsp(n,F)$ and $\Exssp(n,F)$ be the sets of $n$-vertex $F$-free graphs maximizing the number of edges, adjacency spectral radius and signless Laplacian spectral radius, respectively. We denote by $\ex(n,F)$, $\exsp(n,F)$ and $\exssp(n,F)$ the corresponding extremal values. Let $F_{k,r+1}$ be the graph consisting of $k$ copies of $K_{r+1}$ sharing exactly one common vertex. It is known that, when $n$ is sufficiently large, $\Exsp(n,F)\subseteq \Ex(n,F)$ holds for several graphs $F$, such as $K_{r+1}$ \cite{Nikituranthm}, $F_{k,r+1}$ \cite{DNDesai}, $F_k$ \cite{friendship} and $H_{s,k}$ \cite{LiPeng}, where $F_k$ is the friendship graph and $H_{s,k}$ is the graph consisting of $s$ triangles and $k$ odd cycles of length at least five sharing exactly one common vertex. Cioab\u{a}, Desai and Tait \cite{SCioab} conjectured that if the graphs in $\Ex(n,F)$ are Tur\'an graphs plus $O(1)$ edges, then $\Exsp(n,F)\subseteq \Ex(n,F)$ when $n$ is sufficiently large. Wang, Kang and Xue \cite{WKX23} confirmed this conjecture and proved the stronger result that the conclusion remains true provided that $\ex(n,F)=e(T_{r}(n))+O(1)$. Further developments of this result were obtained by Fang, Tait and Zhai \cite{FangTaitZhaiDM} and Byrne \cite{Byrne}. Very recently, Chen, Jin, Zhang and Zhang \cite{Cen1} proved the corresponding result for the signless Laplacian spectral radius. They showed that if chromatic number $\chi(F)=r+1\geq4$ and $\ex(n,F)=e(T_{r}(n))+O(1)$, then $\Exssp(n,F)\subseteq \Ex(n,F)$ when $n$ is sufficiently large.

For disjoint cliques, both $\Ex(n,(t+1)K_{r+1})$ and $\Exsp(n,(t+1)K_{r+1})$ have been determined for all sufficiently large $n$. Simonovits \cite{Simonovits} and Moon \cite{Moon} showed that $K_{t}\vee T_r(n-t)$ is the unique extremal graph in $\Ex(n,(t+1)K_{r+1})$, where $n$ is sufficiently large.
Note that $e\bigl(K_{t}\vee T_r(n-t)\bigr)=e(T_{r}(n))+\frac{t}{r}n+O(1)$. Hence one cannot obtain $\Exsp(n,(t+1)K_{r+1})\subseteq \Ex(n,(t+1)K_{r+1})$
directly from the result of Wang, Kang and Xue. However, in 2023, Ni, Wang and Kang \cite{SPEXtKr} proved that $K_{t}\vee T_r(n-t)$ is also the unique extremal graph in $\Exsp(n,(t+1)K_{r+1})$ for sufficiently large $n$.

Motivated by the results of Simonovits \cite{Simonovits}, Moon \cite{Moon}, and Ni, Wang and Kang \cite{SPEXtKr}, a natural problem is to determine $\Exssp(n,(t+1)K_{r+1})$. To date, some results are known for certain pairs $(t+1,r+1)$. Zhang and Wang solved the cases $(t+1,r+1)=(2,3)$ for $n\geq 44$ \cite{ZhangWang2K3} and $(t+1,r+1)=(3,3)$ for $n\geq 453$ \cite{ZhangWang3K3}. Subsequently, Zhang and Lei \cite{Loworder2K3} settled the remaining range $6\leq n\leq 43$ for the case $(t+1,r+1)=(2,3)$. Moreover, Zheng, Li and Fan \cite{ZhengLiFan26} showed that if $t\geq 2$, $\varepsilon>0$ and $G$ is an $n$-vertex graph with $q(G)\geq \left(1-\frac{1}{t}+\varepsilon\right)2n$, then $G$ contains $\Omega_{t,\varepsilon}(n^{t+1})$ copies of $K_{t+1}$. Recently, Zeng \cite{Zeng} proved that $K_{t}\vee K_{\lfloor (n-t)/2\rfloor,\lceil (n-t)/2\rceil}$ is the unique graph in $\Exssp(n,(t+1)K_3)$ for all $t\geq 1$ and $n\geq 28t+11$. However, as noted in \cite{Loworder2K3}, the extremal graph with the maximum signless Laplacian spectral radius is still unknown for general $(t+1,r+1)$.

In this paper, we determine the unique graph with maximum signless Laplacian spectral radius among all $n$-vertex $(t+1)K_{r+1}$-free graphs for all $r\geq 3$ and $t\geq 0$, whenever $n$ is sufficiently large.
\begin{theorem}\label{t}
    Let $r\geq 3$, $t\geq0$ be fixed integers, and let $n$ be sufficiently large. Let $G\in Ex_{ssp}(n,(t+1)K_{r+1})$. Then  $G\cong K_t\vee T_r(n-t)$.
\end{theorem}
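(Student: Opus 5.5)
We follow the standard stability-plus-refinement scheme for spectral Tur\'an problems, adapted to the signless Laplacian. Let $G\in\Exssp(n,(t+1)K_{r+1})$, $q=q(G)$, and let $\mathbf{x}$ be the positive unit Perron vector of $Q(G)$, normalized so that $\max_v x_v=1$. Since $K_t\vee T_r(n-t)$ is $(t+1)K_{r+1}$-free, we have $q\ge q(K_t\vee T_r(n-t))$. A direct computation with the equitable partition gives
\[
q(K_t\vee T_r(n-t)) \ge \bigl(1-\tfrac1r\bigr)2n+\tfrac{2t}{r}-O(1/n).
\]
More precisely, we will obtain an explicit lower bound with a positive correction term linear in $t$. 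This is the quantity against which every competing structure must lose.

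\textbf{Step 1: edge count and stability.} From $q\le \max_{uv\in E}(d_u+d_v)$ and the bound $q\le \frac{2e(G)}{n-1}+n-2$ alone we do not get enough. Instead we use the Rayleigh quotient identity $q=\sum_{uv\in E}(x_u+x_v)^2/\sum x_v^2$ together with the removal lemma. Remove a maximal family of at most $t$ disjoint copies of $K_{r+1}$, i.e.\ at most $t(r+1)$ vertices; the rest is $K_{r+1}$-free. By the He--Jin--Zhang bound for $K_{r+1}$-free graphs ($r\ge3$) and a perturbation estimate, this yields $e(G)\ge e(T_r(n))-o(n^2)$. The Erd\H{o}s--Simonovits stability theorem then gives a partition $V=V_1\cup\dots\cup V_r$ with $\sum_i e(V_i)=o(n^2)$ and $|V_i|=n/r+o(n)$. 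Here the hypothesis $r\ge3$ is used, exactly as in \cite{He,Cen1}: for $r=2$ the star-type graphs compete in $q$.

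\textbf{Step 2: vertex-level structure.} Let $W$ be the set of vertices with $\Omega(n)$ neighbours inside their own part, and $L$ the set of vertices of degree at most $(1-\frac1r-\varepsilon)n$. Using the eigen-equation $(q-d_v)x_v=\sum_{u\sim v}x_u$, we show $|L|=O(1)$ and that $x_v$ is close to $1$ for $v\notin L$. Next we show that at most $t$ vertices are \emph{high-degree}, meaning adjacent to $\Omega(n)$ vertices in every part. Indeed, $t+1$ such vertices together with the almost complete $r$-partite structure would give $t+1$ disjoint $K_{r+1}$'s by greedy embedding. Let $S$ be the set of high-degree vertices. In $G-S$ the remaining vertices behave like a graph containing no $(t-|S|+1)K_{r+1}$. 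A counting argument then shows that the bad edges inside parts, outside $S$, can be covered by few vertices, and that they cost more in $q$ than they gain.

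\textbf{Step 3: exactness.} We first show $|S|=t$. If $|S|<t$, then the graph is essentially $K_{|S|}\vee(\text{near } T_r)$ plus $O(n)$ edges inside parts that do not form $t-|S|+1$ disjoint cliques. Comparing $q$ via the Perron-vector double counting $\bigl(q(G)-q(H)\bigr)\mathbf{x}^T\mathbf{y}=\mathbf{x}^T\bigl(Q(G)-Q(H)\bigr)\mathbf{y}$ against $H=K_t\vee T_r(n-t)$ then gives a contradiction. Next, each vertex of $S$ must be joined to all other vertices, since adding edges incident to $S$ preserves $(t+1)K_{r+1}$-freeness and strictly increases $q$. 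Then $G-S$ is $K_{r+1}$-free: any $K_{r+1}$ there, together with $S$ and the dense structure, would produce $t+1$ disjoint copies. Hence $G=K_t\vee H'$ with $H'$ a $K_{r+1}$-free graph on $n-t$ vertices. Finally we must show that $H'$ maximizing $q(K_t\vee H')$ is $T_r(n-t)$. We would do this by showing that $H'$ is complete $r$-partite, by maximality and the $K_{r+1}$-free closure, and then balancing the parts through a local move of one vertex between parts, checked with the Perron vector.

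The main obstacle is Step 3's comparison: the gain of the extra $t$ dominating vertices is only of order $t n/r$ in the edge count, which is comparable to the $O(n)$ bad edges. The first thing to try is a careful estimate of $x_v$ for $v\in S$ versus typical vertices, namely $x_v\approx \frac{q-\,\cdot\,}{\cdot}$ computed from the eigen-equations, to show that a dominating vertex contributes strictly more to $q$ than any configuration of $O(n)$ intra-part edges avoiding $t-|S|+1$ disjoint cliques. The balancing of the parts in the last step also needs care, since for $Q$ the optimum with a join is not automatically balanced; we expect an explicit cubic characteristic polynomial comparison to settle it.
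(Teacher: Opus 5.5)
Your outline takes a different route from the paper (stability plus local refinement), but it has a genuine gap at what you call the main obstacle. Ruling out $|S|<t$ is the entire content of the theorem, and you do not supply it.

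That comparison lives in the constant term of $q$, so it needs sharp constants on both sides. A lower bound of the form $\alpha n+(\text{positive term linear in }t)$, such as your $\alpha n+\frac{2t}{r}$ with $\alpha=\frac{2(r-1)}{r}$, cannot work. The $(t+1)K_{r+1}$-free graph $K_{t-1}\vee T_r(n-t+1)$ has only $t-1$ dominating vertices, yet already has $q=\alpha n+(t-1)\beta+O(1/n)$ with $\beta=\frac{4(r-1)}{r(r-2)}$. And $(t-1)\beta>\frac{2t}{r}$ for all $t\ge 2$.

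Three things are needed:
\begin{itemize}
\item the exact value $q(K_t\vee T_r(n-t))=q(T_r(n))+t\beta+O(1/n)$ (Lemma \ref{l6});
\item the matching join bound $q(K_p\vee H)\le q(T_r(n))+c+p\beta+O(1/n)$ whenever $q(H)\le q(T_r(n-p))+c+O(1/n)$ (Lemma \ref{l8});
\item a proof that the part of $G$ outside the dominating vertices carries no constant-order excess.
\end{itemize}
Your threshold definition of $S$ does not give the third point. Whatever threshold $\varepsilon n$ you fix, a vertex with $\varepsilon n/2$ neighbours in its own part lies outside $S$. So at best the bad edges outside $S$ number $O(\varepsilon n)$, worth $O(\varepsilon)$ in $q$. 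You would still have to play a hierarchy of constants against $\beta$, with uniform Perron-entry bounds, and none of this is done.

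The paper's mechanism avoids stability entirely: induct on $t$ and peel off dominating vertices one at a time. Suppose $G=K_p\vee H_p$ with $H_p$ $sK_{r+1}$-free and $s=t+1-p\ge 2$. There are three cases.
\begin{itemize}
\item If $H_p$ is $(s-1)K_{r+1}$-free, induction and Lemma \ref{l8} cap $q(G)$ at $q(T_r(n))+(t-1)\beta+O(1/n)$, a contradiction.
\item If no vertex lies in every family of $s-1$ disjoint copies of $K_{r+1}$, Lemma \ref{l9} makes $H_p$ $F_{(s-1)(r+1)+1,r+1}$-free. Then Lemma \ref{l7} (resting on Lemma \ref{l2}) gives $q(H_p)\le q(T_r(n-p))+O(1/n)$, hence $q(G)\le q(T_r(n))+p\beta+O(1/n)$, again a contradiction.
\item Otherwise such a vertex $u$ exists. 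Then $H_p-u$ is $(s-1)K_{r+1}$-free, and extremality forces $u$ to be dominating.
\end{itemize}
This $F_{k,r+1}$ reduction is exactly what rules out the scenario you worry about: $\Theta(n)$ bad edges but no extra dominating vertex.

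Other steps also fail as written.
\begin{itemize}
\item In Step 1, the He--Jin--Zhang theorem is an upper bound on $q$ for $K_{r+1}$-free graphs. It yields no lower bound on $e(G)$; you would need a genuine signless Laplacian stability theorem. Deleting the at most $t(r+1)$ clique vertices is harmless, since $q\ge\alpha n>n$ gives $x_v\le\sqrt{d_v}/(q-d_v)=O(n^{-1/2})$ for a unit Perron vector. That part is not the issue.
\item In Step 3, maximality does not make a $K_{r+1}$-free graph complete $r$-partite. For $r=3$, the wheel $K_1\vee C_5$ is maximal $K_4$-free but not complete multipartite. The paper applies the He--Jin--Zhang symmetrization (Lemma \ref{ll6}) to $K_t\vee H'$, then adds one edge to force exactly $r$ parts.
\item The balancing needs no cubic comparison, and the optimum is in fact balanced. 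For $G=K_t\vee K_{b_1,\ldots,b_r}$ one has $1=\frac{t}{q-n+2}+\sum_{i}\frac{b_i}{q-n+2b_i}$. Concavity of $x\mapsto\frac{x}{q-n+2x}$ shows that moving a vertex from a part of size $b_1$ to one of size $b_2\le b_1-2$ strictly increases $q$.
\end{itemize}
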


Combining our result with the result of Zeng \cite{Zeng} on $(t+1)K_3$ and the result of He, Jin and Zhang \cite{He} on $K_{r+1}$, it follows that $\Exssp(n,(t+1)K_{r+1})=\Ex(n,(t+1)K_{r+1})$ for all $r\geq 2$ and $t\geq 0$, except for $(r,t)=(2,0)$, whenever $n$ is sufficiently large.

\subsection{Notation}\label{subsec-nota}

 Throughout this paper, we use $q(G)$ to denote the maximum eigenvalue of $Q(G)$. Let $V(G)$ and $E(G)$ be the vertex set and the edge set of a graph $G$, respectively, and write $e(G)=|E(G)|$. For a graph $G$, we use $\overline{G}$ to denote its complement. 
For a set $E\subseteq E(\overline{G})$, let $G+E$ be the graph obtained by adding all edges in $E$, and for a set $V\subseteq V(G)$, let $G-V$ be the graph obtained by deleting the vertices in $V$ (and their incident edges). When $E=\{e\}$, we simply write $G+e$. For convenience, we write $\{1,\ldots,n\}$ as $[n]$.
 
\subsection{Organization}\label{subsec-org}

In Section \ref{sec-pre}, we present some necessary preliminary results. In Section \ref{sec-pf}, we first establish several lemmas that are crucial to the proof, and then prove Theorem \ref{t}.

	\section{Preliminaries}\label{sec-pre}

    \begin{lemma}{\rm(\!\!\cite{Cve})}\label{ll1}
        Let $G$ be a connected graph. If $H$ is a proper subgraph of $G$, then $q(H)<q(G)$.
    \end{lemma}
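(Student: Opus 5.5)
The statement to address is Lemma~\ref{ll1}: if $G$ is connected and $H$ is a proper subgraph of $G$, then $q(H)<q(G)$. I will prove it with Perron--Frobenius theory applied to the nonnegative matrices $Q(H)\le Q(G)$.

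First, reduce to the case where $H$ is a spanning subgraph of $G$. If $V(H)\subsetneq V(G)$, add the missing vertices as isolated vertices. This gives a spanning subgraph $H'$ of $G$ with $q(H')=q(H)$. Also $H'\neq G$, since $G$ is connected and has more than one vertex once $H$ is proper, so some vertex of $G$ outside $V(H)$ has an edge. So we may assume $V(H)=V(G)$ and $E(H)\subsetneq E(G)$. Then entrywise $0\le Q(H)\le Q(G)$, and strict inequality holds in at least one entry: for a missing edge $uv$, the $(u,v)$ entry and the diagonal entries at $u$ and $v$ all increase.

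Next, take a nonnegative unit eigenvector $y$ of $Q(H)$ for $q(H)$. Perron--Frobenius gives such a vector for any nonnegative symmetric matrix. Then
\[
q(H)=y^{T}Q(H)y\le y^{T}Q(G)y\le q(G)
\]
by Rayleigh's principle.

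For strictness, suppose $q(H)=q(G)$. Then equality holds throughout, so $y$ maximizes the Rayleigh quotient of $Q(G)$ and is an eigenvector of $Q(G)$ for $q(G)$. Since $G$ is connected, $Q(G)$ is irreducible. By Perron--Frobenius, $y$ is then entrywise positive, because a nonnegative eigenvector of an irreducible nonnegative matrix belonging to its spectral radius is positive. Now
\[
y^{T}\bigl(Q(G)-Q(H)\bigr)y=\sum_{uv\in E(G)\setminus E(H)}(y_u+y_v)^2>0,
\]
which contradicts the equality. Hence $q(H)<q(G)$.

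The only delicate point is justifying that $y$ is a $q(G)$-eigenvector of $Q(G)$ and hence positive. This follows because a vector attaining the maximum of the Rayleigh quotient of a symmetric matrix is an eigenvector for the largest eigenvalue.
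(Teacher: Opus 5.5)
Your argument is correct. The reduction to a spanning subgraph, the chain $q(H)=y^{T}Q(H)y\le y^{T}Q(G)y\le q(G)$, and the use of irreducibility of $Q(G)$ to force a positive maximizer (so that $\sum_{uv\in E(G)\setminus E(H)}(y_u+y_v)^2>0$) are all sound. The paper does not prove this lemma and only cites it as a known fact; that fact is classically obtained from the strict monotonicity of the Perron--Frobenius spectral radius for irreducible nonnegative matrices, which is essentially what you reprove.
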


    \begin{lemma}{\rm(\!\!\cite{Cai1,Cen1})}\label{l1}
       Let $n, b,$ and $r$ be non-negative integers satisfying $n = br + s$, where $b\geq1$ and  $0 \leq s < r$. Then $q(T_r(n))=n-2b+\frac{n-2+\sqrt{(n-2)^2+8b(r-s)}}{2}$.
    \end{lemma}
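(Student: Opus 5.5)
The plan is to compute $q(T_r(n))$ directly from a two-class equitable partition of the vertex set and the $2\times 2$ quotient matrix of $Q(T_r(n))$. Write $n=br+s$ with $0\le s<r$. Then $T_r(n)$ has $s$ parts of size $b+1$ and $r-s$ parts of size $b$. Let $A$ be the union of the $s$ larger parts and $B$ the union of the $r-s$ smaller parts, so $|A|=s(b+1)$ and $|B|=(r-s)b=:m$. Note that $s(b+1)=n-m$. A vertex in a part of size $b+1$ has degree $n-b-1$, and a vertex in a part of size $b$ has degree $n-b$.

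First assume $s\ge 1$, so both classes are nonempty. Each vertex of $A$ has $(s-1)(b+1)=n-m-b-1$ neighbours in $A$ and $m$ neighbours in $B$. Each vertex of $B$ has $n-m$ neighbours in $A$ and $(r-s-1)b=m-b$ neighbours in $B$. Adding the degree on the diagonal, the quotient matrix of $Q$ is
\[
M=\begin{pmatrix} 2n-2b-2-m & m\\ n-m & n-2b+m\end{pmatrix}
=(n-2b)I+\begin{pmatrix} n-2-m & m\\ n-m & m\end{pmatrix}.
\]
Since $\{A,B\}$ is an equitable partition, the following standard argument applies. The eigenvector of $M$ for its largest eigenvalue can be taken positive, by Perron--Frobenius, because $M$ is a positive matrix. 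Lifting it to a vector constant on each class gives a positive eigenvector of $Q(T_r(n))$. The graph is connected, so $Q$ is irreducible and nonnegative, and a positive eigenvector must belong to $q(T_r(n))$. Hence $q(T_r(n))$ equals the largest eigenvalue of $M$.

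The shifted matrix has trace $n-2$ and determinant $(n-2-m)m-m(n-m)=-2m$. Its largest eigenvalue is therefore $\frac{n-2+\sqrt{(n-2)^2+8m}}{2}$. Adding $n-2b$ and substituting $m=b(r-s)$ gives exactly the formula stated in Lemma~\ref{l1}.

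The only point needing separate care is the degenerate case $s=0$. Then $A$ is empty, the partition has a single class, and $T_r(n)$ is $(n-b)$-regular, so $q=2(n-b)$. In this case $m=n$ and $\sqrt{(n-2)^2+8n}=n+2$, so the formula gives $n-2b+n=2n-2b$, and it still holds.

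I expect no real obstacle. The step to check carefully is the bookkeeping of the quotient entries, in particular the identity $s(b+1)=n-(r-s)b$, which collapses the determinant to $-2m$.
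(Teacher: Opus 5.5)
Your proof is correct. The quotient entries, the trace $n-2$ and determinant $-2m$ of the shifted matrix, the identification of $q(T_r(n))$ with $\lambda_1(M)$ via the positive lifted eigenvector, and the separate regular case $s=0$ (where $(n-2)^2+8n=(n+2)^2$) all check out.

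The paper gives no proof of this lemma; it quotes the formula from the literature. So your equitable-partition argument is a self-contained derivation, not a variant of something in the text. It agrees with the eigen-equation computation the paper carries out later in the proof of Lemma~\ref{l6}. Take $t=0$ in \eqref{s7} and group the parts by size, writing $y=q-n+2b$. This gives $1=\frac{n-m}{y+2}+\frac{m}{y}$, that is, $y^2-(n-2)y-2m=0$, which is exactly the characteristic polynomial of your shifted quotient matrix.

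The two routes are the same computation in different packaging. In the paper's route, positivity of the Perron vector in $(q-n+2a_i)x_i=X$ forces $y>0$, which selects the positive root. In your route, Perron--Frobenius applied to the positive matrix $M$ does the same job. Your presentation has the small advantage of producing the closed form directly from a $2\times 2$ matrix, without dividing through by $X$ and the denominators $q-n+2a_i$.
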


     \begin{lemma}{\rm(\!\!\cite{He})}\label{ll2}
       Let $r\ge3$, and let  $G\in Ex_{ssp}(n,K_{r+1})$. Then  $G\cong  T_r(n)$.
    \end{lemma}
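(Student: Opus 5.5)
The statement to prove is the last one stated, Lemma~\ref{ll2}: for $r\ge 3$, every $G\in Ex_{ssp}(n,K_{r+1})$ is isomorphic to $T_r(n)$. I would prove it by induction on $n$, combining a degree-based upper bound for $q$ with a vertex-deletion step. The base case $n\le r$ is trivial, since then $K_n=T_r(n)$ is $K_{r+1}$-free and, by Lemma~\ref{ll1}, uniquely maximizes $q$.

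\textbf{Step 1 (degree bound).} For any graph, $q(G)\le \max_{uv\in E(G)}\bigl(d(u)+d(v)\bigr)$. This follows because $Q=R^{\top}R$ with $R$ the vertex--edge incidence matrix, so $q(G)$ equals the largest eigenvalue of $RR^{\top}=2I+A(L(G))$. The latter is at most $2$ plus the maximum degree of the line graph, which is $\max_{uv}(d(u)+d(v)-2)$. A sharper tool is the averaged bound $q(G)\le \max_{v}\bigl(d(v)+m(v)\bigr)$, where $m(v)$ is the average degree of the neighbours of $v$. For a $K_{r+1}$-free graph the neighbourhood $N(v)$ is $K_r$-free, so Tur\'an's theorem applied inside $N(v)$ controls $\sum_{u\in N(v)}d(u)$.

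\textbf{Step 2 (local estimate).} Let $x$ be the Perron vector of $Q(G)$ and $u$ a vertex with $x_u$ maximal. The eigen-equation gives $(q-d(u))x_u=\sum_{w\sim u}x_w$. Iterating once, I would bound $q(q-d(u))$ by a combination of $d(u)^2$ and $\sum_{w\sim u}d(w)$. Since $N(u)$ is $K_r$-free, $e(N(u))\le e(T_{r-1}(d(u)))$, and $\sum_{w\sim u}d(w)\le 2e(N(u))+e(N(u),\overline{N(u)})\le 2e(T_{r-1}(d))+d(n-d)$. Optimizing over $d=d(u)$ should yield $q\le 2n(1-1/r)$ up to lower-order terms, with near-equality forcing $d(u)\approx (1-1/r)n$ and $N(u)$ nearly $(r-1)$-partite Tur\'an.

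\textbf{Step 3 (exact structure).} To get the exact answer, I would use stability. By Lemma~\ref{l1}, $q(G)\ge q(T_r(n))\approx 2(1-1/r)n$. Through the signless Laplacian Rayleigh quotient $\sum_{uv}(x_u+x_v)^2$ together with spectral counting, this forces $e(G)\ge (1-1/r)n^2/2-o(n^2)$. The Erd\H{o}s--Simonovits stability theorem then makes $G$ close to $T_r(n)$. Next I would show that the Perron entries are nearly uniform and that every vertex has degree about $(1-1/r)n$: a low-degree vertex could be deleted and replaced by a copy of a vertex in the smallest part, which increases $q$ and contradicts maximality. Once $G$ is shown to be $r$-partite, maximality together with Lemma~\ref{ll1} makes $G$ complete $r$-partite. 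Among complete $r$-partite graphs, an explicit computation in the style of Lemma~\ref{l1} shows that moving a vertex from a larger part to a smaller part increases $q$. This is where $r\ge3$ enters; for $r=2$ the star $K_{1,n-1}$ beats the balanced $K_{\lfloor n/2\rfloor,\lceil n/2\rceil}$.

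\textbf{Main obstacle.} The hardest step is the balancing inequality for complete $r$-partite graphs together with the elimination of unbalanced near-extremal configurations. The reason is that $q$ rewards high-degree vertices, so unlike the adjacency case, lopsided structures genuinely compete. I would first compute $q(K_{n_1,\dots,n_r})$ as the largest root of its secular equation $\sum_i \frac{n_i}{q-2n+2n_i}\cdot(\text{weights})=1$. I would then verify monotonicity under the transfer $n_i\to n_i-1$, $n_j\to n_j+1$ whenever $n_i\ge n_j+2$, using $r\ge3$ to keep the relevant quantity positive.
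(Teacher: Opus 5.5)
\textbf{The proposal has a genuine gap.} The paper does not prove this lemma. It quotes it from He, Jin and Zhang, where it holds for every $n$ with no largeness assumption. Your plan is asymptotic, and its load-bearing steps are not established.

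\textbf{Steps 1--2 cannot give $q\le\frac{2(r-1)}{r}n+o(n)$.}
\begin{itemize}
\item With $d=d(v)$ and Tur\'an's theorem inside $N(v)$ you get $\sum_{w\in N(v)}d(w)\le dn-\frac{d^2}{r-1}$. So the bound $q\le\max_v(d(v)+m(v))$ gives only $q\le n+\frac{r-2}{r-1}(n-1)$. This exceeds $\frac{2(r-1)}{r}n$ for every $r\ge3$: about $\frac32 n$ against $\frac43 n$ when $r=3$.
\item Iterating the eigen-equation with $x_z\le x_u$ gives $q(q-d)\le 2\sum_{w\sim u}d(w)\le 2dn-\frac{2d^2}{r-1}$. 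For $r=3$ and $d(u)=n-1$ this still allows $q\approx\frac{1+\sqrt5}{2}n$.
\end{itemize}
Both estimates are loosest when $u$ dominates a $K_r$-free neighbourhood, yet for $r=3$ such graphs only have $q=(1+o(1))n$. Replacing $x_z$ by $x_u$, or Perron weights by average degrees, discards exactly the information needed.

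\textbf{Step 3 is asserted, not argued.}
\begin{itemize}
\item No argument is given that $q(G)\ge q(T_r(n))$ forces $e(G)\ge(1-\frac1r)\frac{n^2}{2}-o(n^2)$. The standard bound $q\le\frac{2e(G)}{n-1}+n-2$ only gives $e(G)\gtrsim(\frac12-\frac1r)n^2$.
\item ``Once $G$ is shown to be $r$-partite'' skips precisely the part that makes exact spectral Tur\'an results hard.
\end{itemize}
Even if all this were filled in, you would obtain the lemma only for $n\ge n_0(r)$. The announced induction on $n$ is never used.

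\textbf{The missing idea is an exact reduction to complete multipartite graphs,} which makes Steps 1--3 unnecessary and works for all $n$. Zykov symmetrization is compatible with $q$. Let $G$ be extremal and connected (as in Lemma~\ref{ll3}), with unit Perron vector $\mathbf{x}$, and let $u\not\sim v$. Write $q(G)=S_0+S_u+S_v$, where $S_u=\sum_{w\in N(u)}(x_u+x_w)^2$, $S_v=\sum_{w\in N(v)}(x_v+x_w)^2$, and $S_0$ is the sum over the edges of $G-\{u,v\}$.
\begin{itemize}
\item Making $v$ a twin of $u$ creates no $K_{r+1}$. Testing with $x_v$ replaced by $x_u$ gives the Rayleigh quotient $\frac{S_0+2S_u}{1-x_v^2+x_u^2}$. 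The reverse operation gives $\frac{S_0+2S_v}{1-x_u^2+x_v^2}$.
\item The numerators sum to $2q(G)$ and the denominators to $2$, so the larger quotient is at least $q(G)$. By maximality both quotients equal $q(G)$, so both test vectors are eigenvectors of the modified graphs.
\item Comparing the eigen-equations at any $w\in N(u)\setminus N(v)$, before and after making $v$ a twin of $u$, gives $x_u+x_w=0$, which is impossible.
\end{itemize}
Hence $N(u)=N(v)$ whenever $u\not\sim v$, so $G$ is complete $r'$-partite with $r'\le r$. The case $r'<r$ is excluded by adding an edge inside a part (Lemma~\ref{ll1}).

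Your balancing step then finishes the proof. This is the same architecture the paper uses at the end of the proof of Theorem~\ref{t}: complete multipartite structure, then an added edge, then concavity. Two corrections:
\begin{itemize}
\item The secular equation is $\sum_i\frac{n_i}{q-n+2n_i}=1$ (cf.\ \eqref{s7} with $t=0$), not $\sum_i\frac{n_i}{q-2n+2n_i}\cdot(\text{weights})=1$. The hypothesis $r\ge3$ enters because the left side equals $\frac r2>1$ at $q=n$. This forces $q>n$ and hence strict concavity of $x\mapsto\frac{x}{q-n+2x}$.
\item For $r=2$ every $K_{a,n-a}$ has $q=n$, so the star ties the balanced graph rather than beating it.
\end{itemize}
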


    \begin{lemma}{\rm(\!\!\cite{He})}\label{ll6}
         Let $a\ge1$ be an integer, and let   $R$ be a  graph  with $|V(R)|\geq1$. Let $H$ be a  $K_{a+1}$-free graph. Then there exists a graph $H_1$ of order $m$ such that $H_1=bK_1\vee H_2$ and $q(R\vee H)\leq q(R\vee H_1)$, where $H_2$ is a $K_a$-free graph and $b\geq1$. Moreover, $q(R\vee H)=q(R\vee H_1)$ if and only if $H\cong H_1$.
        
    \end{lemma}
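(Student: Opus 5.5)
The plan is a Zykov-type symmetrization carried out on the Perron vector of $Q(R\vee H)$.

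\textbf{Setup.} Since $|V(R)|\ge 1$, the graph $G_0=R\vee H$ is connected. Let $x$ be the positive unit Perron vector of $Q(G_0)$, so that
\[
q(G_0)=x^{T}Q(G_0)x=\sum_{ij\in E(G_0)}(x_i+x_j)^2 .
\]
All modifications will only change edges inside $H$. The vertices of $R$ and the join edges between $R$ and $H$ stay fixed, so their contributions to the quadratic form are unchanged.

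\textbf{Symmetrization step.} For nonadjacent $u,v\in V(H)$, write $\Phi(v;S)=\sum_{w\in S}(x_v+x_w)^2$. Replacing $N_H(v)$ by $N_H(u)$, which makes $v$ a false twin of $u$, changes the form by
\[
\Delta_{v\to u}=\Phi(v;N_H(u))-\Phi(v;N_H(v)).
\]
If $uv\notin E(H)$, turning $v$ into a twin of $u$ cannot create a $K_{a+1}$: such a clique would contain at most one of $u,v$, and could then be mapped back into $H$ using $u$ in place of $v$. I will pick the pivot $u$ to maximize a suitable weight, first trying $u$ maximizing $\Phi(u;N_H(u))$, or alternatively $\sum_{w\in N_H(u)}x_w$.

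Then I process the vertices $v$ of $B=V(H)\setminus N_H(u)$ one at a time, each time making $v$ a twin of $u$. I re-apply the Rayleigh quotient after each step, with the current Perron vector. At the end, $B$ is an independent set of size $b=|B|\ge 1$, every vertex of $B$ is complete to $N_H(u)$, and $H[N_H(u)]$ is unchanged. So the final graph is $H_1=bK_1\vee H_2$ with $H_2=H[N_H(u)]$. This $H_2$ is $K_a$-free, because $u$ together with a $K_a$ in $N_H(u)$ would give a $K_{a+1}$ in $H$. The order $m$ is preserved.

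\textbf{Main obstacle.} The key point is showing $\Delta_{v\to u}\ge 0$ at each step. In $\Phi$ the term $x_v$ is coupled with the neighbor weights, so choosing $u$ to maximize $x_u$ alone does not suffice. I would first try the two-sided trick. For a nonadjacent pair $u,v$, compare $\Delta_{v\to u}$ with $\Delta_{u\to v}$, and expand both using the eigen-equations
\[
\bigl(q-|V(R)|-d_H(v)\bigr)x_v=\sum_{z\in V(R)}x_z+\sum_{w\in N_H(v)}x_w ,
\]
and the analogous equation at $u$. The aim is to show that $\Delta_{v\to u}+\Delta_{u\to v}\ge 0$, so that at least one direction does not decrease the form. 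This is consistent with choosing as pivot the vertex that wins against all others. If this coupling proves awkward, the fallback is to symmetrize with the modified test vector $y$ defined by $y_v=x_u$ for the new twins and $y=x$ elsewhere. With $u$ chosen to maximize $\sum_{w\in N_H(u)}x_w$, the comparison should then become a clean term-by-term inequality, at the cost of tracking $\|y\|$.

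\textbf{Equality.} Each step gives $q(\text{new})\ge x^{T}Q(\text{new})x\ge q(\text{old})$. If $q(R\vee H)=q(R\vee H_1)$, then $x$ is also a Perron vector of the final graph. Comparing the eigen-equations at a vertex whose edge set changed gives a contradiction with positivity of $x$, in the same way as in Lemma~\ref{ll1}. Hence no edge was actually changed, and $H\cong H_1$.
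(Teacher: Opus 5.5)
The paper does not prove this lemma; it quotes it from He, Jin and Zhang. Your argument therefore has to stand on its own, and as written it has a genuine gap. The only step with real content is that the twinning does not decrease $q$. You leave it open, and the route you try first points the wrong way. Take $G=R\vee H$, $q=q(G)$, and the eigen-equations you wrote down. A direct expansion gives
\[
\Delta_{v\to u}+\Delta_{u\to v}=-(x_u-x_v)^2\bigl((q-d_G(u))+(q-d_G(v))\bigr).
\]
Here $q-d_G(w)>0$ for every $w$, because $(q-d_G(w))x_w=\sum_{z\in N_G(w)}x_z>0$. So this sum is always $\le 0$, and strictly negative unless $x_u=x_v$.

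Even when one of the two directions is nonnegative, that is only a pairwise relation. Nothing produces a single pivot that beats every $v\in B$. Your sequential scheme also recomputes the Perron vector after each twinning while keeping $u$ fixed, so whatever made $u$ a good pivot at the start is not preserved. Finally, your equality paragraph relies on $x^{T}Q(\mathrm{new})x\ge q(\mathrm{old})$, which is exactly the missing inequality.

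Your fallback is the right idea, but it uses the wrong weight, and it should be done in one shot rather than sequentially. Let $B=V(H)\setminus N_H(u)$ and twin all of $B$ to $u$ at once. Set $y=x$ off $B$ and $y_v=x_u$ on $B$. Counting the edges incident to $B$ gives
\[
y^{T}Q(R\vee H_1)y-q\|y\|^2=|B|\psi(u)-\sum_{v\in B}\psi(v)+\sum_{vv'\in E(H[B])}(x_v+x_{v'})^2,
\]
where
\[
\psi(w)=\sum_{z\in N_G(w)}(x_w+x_z)^2-qx_w^2=\sum_{z\in N_G(w)}x_z(x_z+x_w).
\]
Hence $u$ must maximize $\psi$ over $V(H)$. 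Maximizing $\sum_{w\in N_H(u)}x_w$ only maximizes $(q-d_G(u))x_u$. It controls neither $x_u$ itself nor the quadratic part $\sum_{z\in N_G(u)}x_z^2$, so no term-by-term comparison follows. With the $\psi$-maximal pivot you get $q(R\vee H_1)\ge q$ directly.

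For equality, the extremal vector of $R\vee H_1$ is then $y$, not $x$, so the eigen-equation comparison has to be redone:
\begin{itemize}
\item Equality forces $E(H[B])=\emptyset$.
\item The equation at a vertex of $R$ gives $\sum_{v\in B}(x_u-x_v)=0$. This is where $|V(R)|\ge 1$ is genuinely used, not just for connectivity.
\item The equation at $w\in N_H(u)$ then collapses to $\sum_{z\in B\setminus N_H(w)}(x_w+x_z)=0$. So $B$ is complete to $N_H(u)$, and $H=H_1$.
\end{itemize}
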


\begin{lemma}{\rm(\!\!\cite{Cen1})}\label{l2}
    Let $r\geq3$ be a fixed integer and $F$ be any graph such that chromatic number $\chi(F)=r+1$ and $ex(n, F)= e(T_r(n))+O(1)$. Then $Ex_{ssp}(n,F)\subseteq Ex(n,F)$ for sufficiently large $n$.
\end{lemma}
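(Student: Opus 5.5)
The plan is a stability argument followed by local refinement. Throughout, let $G\in Ex_{ssp}(n,F)$, let $\mathbf{x}$ be the Perron vector of $Q(G)$ normalised so that $\max_v x_v=1$, and let $q=q(G)$. Since $T_r(n)$ is $F$-free (as $\chi(F)=r+1$), Lemma~\ref{l1} gives
\[
q\ \ge\ q(T_r(n))\ =\ \Bigl(2-\tfrac{2}{r}\Bigr)n-O(1).
\]

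\textbf{Step 1: global structure.} I would first pass from the eigenvalue to edges through
\[
q\le \max_{uv\in E(G)}(d_u+d_v)\qquad\text{or}\qquad q\le \max_v\Bigl(d_v+\tfrac{1}{d_v}\textstyle\sum_{u\sim v}d_u\Bigr).
\]
Combined with the Erdős–Stone–Simonovits theorem and a signless-Laplacian spectral stability statement, this should give $e(G)\ge e(T_r(n))-\varepsilon n^2$. Stability then provides a partition $V_1,\dots,V_r$ maximising $e(V_1,\dots,V_r)$ with $|V_i|=n/r+o(n)$ and $\sum_i e(G[V_i])\le\varepsilon n^2$.

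\textbf{Step 2: low-degree vertices.} Next I would show that the set $L$ of vertices of degree at most $(1-1/r-\delta)n$ has bounded size. Deleting $L$, the eigen-equation
\[
(q-d_v)x_v=\sum_{u\sim v}x_u
\]
shows that every vertex outside $L$ has at most $\delta n$ neighbours inside its own part. Otherwise one could embed $F$, by the standard argument that $F$ is contained in a complete $r$-partite graph with one extra vertex or edge. To prove $L=\emptyset$, I would compare $G$ with the graph obtained by rewiring a vertex $v\in L$ to all of $V\setminus V_i$ for a suitable $i$, and show that $q$ strictly increases.

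\textbf{Step 3: main obstacle.} I expect Step 2 to be the hardest part, and it is exactly where $r\ge3$ is used. For $Q$ the diagonal term $d_v$ dominates, so the loss when rewiring must be controlled through $q-d_v$. A vertex of degree about $(1-1/r)n$ has $q-d_v\approx(1-1/r)n$. This gap is linear in $n$ and, for $r\ge 3$, comparable to $d_v$ itself, which makes all entries $x_v$ bounded below by a constant. For $r=2$ this balance breaks down (there $q\approx n\approx d_v$ for high-degree vertices), and star-like configurations compete.

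\textbf{Step 4: from local structure to the edge count.} With the structure fixed, the hypothesis $ex(n,F)=e(T_r(n))+O(1)$ means, by Simonovits' decomposition theory, that the edges inside the parts, together with the edges missing between parts, are $O(1)$ in number. I would then show that the entries of $\mathbf{x}$ are $1-O(1/n)$ on every vertex, and that part sizes are balanced up to $O(1)$.

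\textbf{Step 5: conclusion.} The Rayleigh quotient then gives
\[
q(G)=\frac{\mathbf{x}^{T}Q\mathbf{x}}{\mathbf{x}^{T}\mathbf{x}}=\frac{4e(G)}{n}+c(\text{part sizes})+o(1/n)\text{-type corrections},
\]
so that among such near-Turán $F$-free graphs, a larger edge count (with balanced parts) forces a larger $q$. This second-order expansion, comparing $G$ with a graph $G^*\in Ex(n,F)$, I would carry out by the same perturbation as in the adjacency case of Wang–Kang–Xue, adapted to the extra diagonal term. Finally, if $e(G)<ex(n,F)$, I would apply the comparison to $G^*$, which has the same partition type, and obtain $q(G^*)>q(G)$, a contradiction. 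Hence $G\in Ex(n,F)$.
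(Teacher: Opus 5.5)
The paper does not prove this lemma; it is quoted from \cite{Cen1}. Your outline therefore has to stand on its own, and it has a genuine gap at the decisive Step~5. You assert that among near-Tur\'an $F$-free graphs a larger edge count forces a larger $q$, and that the edge-extremal $G^*$ ``has the same partition type'' as $G$. But Step~4 only gives part sizes $n/r+O(1)$, and your correction $c(\text{part sizes})$ lives at the same scale $1/n$ as a single edge. Solving $1=\sum_i n_i/(q-n+2n_i)$ for $K_{n_1,\dots,n_r}$ with $n_i=n/r+\delta_i$, $\sum_i\delta_i=0$, $\delta_i=O(1)$ gives
\[
q(K_{n_1,\dots,n_r})=\alpha n-\frac{2(r-2)}{rn}\sum_i\delta_i^2+O(n^{-2}),\qquad \frac{4e(K_{n_1,\dots,n_r})}{n}=\alpha n-\frac{2}{n}\sum_i\delta_i^2 .
\]
So for $G=K(V_1,\dots,V_r)+E_0-E_1$ with $|E_0|,|E_1|=O(1)$, one has $q(G)=\frac{4e(G)}{n}+\frac{4}{rn}\sum_i\delta_i^2+O(n^{-2})$, since the Perron vectors are uniform up to a factor $1+O(1/n)$. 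For instance, $q(K_{b+1,b-1,b})$ is only $\frac{4}{3n}$ below $q(T_3(3b))=4b$, although it has one edge fewer. Hence an unbalanced $G$ with $e(G)=e(G^*)-1$ is not excluded by your comparison.

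The missing idea is a proof that $G$ is \emph{exactly} balanced. Vertices of $V_i$ not incident to $E_0\cup E_1$ are twins. Moving one of them from a part with excess $\delta_1$ to a part with excess $\delta_2\le\delta_1-2$ therefore preserves $F$-freeness, and by the expansion above it raises $q$ by $\frac{4(r-2)}{rn}(\delta_1-\delta_2-1)+O(n^{-2})>0$. This is where $r\ge3$ genuinely enters (for $r=2$, $q(K_{a,b})=a+b$ is blind to balance), not through the mechanism of your Step~3. The same move increases the edge count, so $G^*$ is balanced too. Then both graphs satisfy $q=q(T_r(n))+\frac4n\bigl(e-e(T_r(n))\bigr)+O(n^{-2})$, and integrality of $e$ forces $e(G)=e(G^*)$.

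Steps~1 and~4 are likewise asserted rather than proved. The bounds $q\le\max_{uv\in E}(d_u+d_v)$ and $q\le\max_v\bigl(d_v+\frac{1}{d_v}\sum_{u\sim v}d_u\bigr)$ concern a single edge or vertex and cannot give $e(G)\ge e(T_r(n))-\varepsilon n^2$. You need a genuine $Q$-index stability theorem for $F$-free graphs. Such a theorem is false for $r=2$ (since $q(K_{1,n-1})=n=q(T_2(n))$), so this is the other place where $r\ge3$ is essential.

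In Step~4, ``Simonovits' decomposition theory'' does not by itself bound the exceptional edges of a \emph{spectral} extremal graph. What the hypothesis $\ex(n,F)=e(T_r(n))+O(1)$ actually gives is the following.
\begin{enumerate}
\item[(i)] $F\subseteq K_1\vee T_r(rm)$ for some $m$, since otherwise $K_1\vee T_r(n-1)$ is $F$-free with $\lceil n/r\rceil-1$ more edges than $T_r(n)$. This bounds degrees inside parts. Note that ``one extra edge'', i.e.\ colour-criticality, is \emph{not} implied, as $F_{2,r+1}$ shows.
\item[(ii)] $F$ embeds into a Tur\'an graph with a bounded matching added inside one part, since otherwise $T_r(n)$ plus a perfect matching inside a part is $F$-free. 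Together with (i), this bounds $e(G[V_i])$.
\end{enumerate}
The missing cross edges must then be bounded separately, using $q(G)\ge q(T_r(n))$ and the near-uniformity of the Perron vector.
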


Recall that $F_{k,r+1}$ is the graph consisting of $k$ copies of $K_{r+1}$ sharing exactly one common vertex.

\begin{lemma}{\rm(\!\!\cite{Cen2,Hou})}\label{l3}
   Let $r\ge3$ be a fixed integer and $n$ be sufficiently large. Suppose $G\in Ex(n,F_{k,r+1})$. Then there exists a subset  $E_0\subseteq E(\overline{T_{r}(n)})$ with $|E_0|=O(1)$ such that  $G \cong T_{r}(n) + E_0$.
\end{lemma}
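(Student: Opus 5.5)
The statement here is Lemma~\ref{l3}: for fixed $r\ge 3$, fixed $k$ and large $n$, every $G\in \Ex(n,F_{k,r+1})$ is obtained from $T_r(n)$ by adding $O(1)$ edges. My plan is the standard stability-then-cleaning argument. First I obtain a lower bound: since $\chi(F_{k,r+1})=r+1$, the graph $T_r(n)$ is $F_{k,r+1}$-free, so $e(G)\ge e(T_r(n))$. Together with the Erd\H{o}s--Stone--Simonovits theorem, this puts $G$ in the regime of the Erd\H{o}s--Simonovits stability theorem. Hence $G$ has a partition $V(G)=V_1\cup\cdots\cup V_r$ with $\sum_i e(G[V_i])=o(n^2)$ and $|V_i|=n/r+o(n)$. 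I will take this partition to maximize the number of crossing edges.

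The second step is a minimum-degree reduction. If some vertex had degree at most $(1-\frac1r-\varepsilon)n$, I would delete it and iterate. The standard counting, using $e(G)\ge e(T_r(n))$ and the Erd\H{o}s--Stone bound on the remaining graph, shows that only $O(1)$ such vertices could be removed before the edge count exceeds $\ex$ on the smaller vertex set. In fact, extremality of $G$ lets me conclude directly that $\delta(G)\ge (1-\frac1r)n-O(1)$: otherwise I replace a low-degree vertex by a copy of a high-degree vertex outside the exceptional set, which keeps the graph $F_{k,r+1}$-free and strictly increases $e$.

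Let $W$ be the set of vertices with at least $\varepsilon n$ neighbours in their own part. By maximality of the cut, such a vertex also has at least $\varepsilon n$ neighbours in every other part. Using the minimum degree, common neighbourhoods of any $r$ vertices from distinct parts have linear size. So from a vertex of $W$ I can greedily build $k$ copies of $K_{r+1}$ meeting only at that vertex, which is a contradiction; hence $W=\emptyset$.

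Next, let $H=\bigcup_i G[V_i]$. If some vertex $v$ had $k$ neighbours in $H$, then I would again build $k$ cliques $K_{r+1}$ through $v$, each using one of these intra-part edges. Each clique would be completed greedily from the linear-size common neighbourhoods in the other parts, keeping the copies disjoint apart from $v$. So $\Delta(H)\le k-1$. Similarly, a matching of size $k$ in $H$ (or, more precisely, an appropriate bounded configuration of the kind used by Chen--Gould--Pfender--Wei) together with a centre vertex in a third part yields $F_{k,r+1}$, since the centre vertex has almost full neighbourhood in the other parts. Thus the matching number of $H$ is below $k$, and $e(H)\le 2(k-1)\cdot(k-1)=O(1)$. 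Combining $e(G)\ge e(T_r(n))$ with $e(G)\le \sum_{i<j}|V_i||V_j|-m+O(1)$, where $m$ is the number of missing crossing edges, gives $m=O(1)$ and $\bigl||V_i|-n/r\bigr|=O(1)$. More precisely, the imbalance cost $\sum_{i<j}|V_i||V_j|$ compared with $e(T_r(n))$ must be $O(1)$.

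The last step, which I expect to be the main obstacle, is to upgrade ``Tur\'an-like up to $O(1)$ edits'' to ``contains $T_r(n)$ exactly as a spanning subgraph''. Two things must be shown: there are no missing crossing edges, and the part sizes are exactly balanced. For a missing crossing edge $uv$, I would argue that adding it creates no $F_{k,r+1}$ when $u,v$ are far from the $O(1)$ vertices incident to $H$. Any new $K_{r+1}$ must contain an intra-part edge, and it contains $u$ and $v$ only if they are adjacent to endpoints of $H$-edges. So I reduce to $u$ or $v$ lying in the bounded set $S$ of vertices incident to $H$ or to missing edges. For $u\in S$, I would use extremality again. I compare $G$ with the graph obtained by deleting $u$ and re-inserting it as a vertex fully joined to the other parts. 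Alternatively I compare it with a suitable graph $T_r(n)+E'$ having $e(H)$ intra edges, which I would need to show is $F_{k,r+1}$-free and has at least as many edges. Balancing the part sizes then follows because moving a vertex from a larger to a smaller part gains $\Omega(1)$ crossing edges while changing the bounded intra-part structure by at most $O(1)$ in a controllable way. If this local exchange becomes messy, my fallback is to import the exact value of $\ex(n,F_{k,r+1})$ from Chen--Gould--Pfender--Wei and compare edge counts directly.
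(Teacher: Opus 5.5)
Note first that the paper does not prove Lemma~\ref{l3}. It quotes it from the literature (the Chen--Gould--Pfender--Wei determination of $\ex(n,F_{k,r+1})$ and of its extremal graphs), so you are attempting an independent proof. There is a genuine gap at the final step. Your stability-and-cleaning part, once repaired (see below), gives $G=K(V_1,\dots,V_r)-M+H$ with $|M|=m=O(1)$, $e(H)=O(1)$ and $\bigl||V_i|-n/r\bigr|=O(1)$. The statement, however, needs $m=0$ and exact balance, and that is not proved:
\begin{itemize}
\item The case ``$u,v$ far from $S$'' is empty, because endpoints of a missing edge lie in $S$ by definition. The correct version of that argument (swap $u$ for an unused vertex of its part with no $H$-edges and no missing edges) only shows that both endpoints of every missing edge are incident to $H$.
\item Re-inserting $u$ fully joined to the other parts is $F_{k,r+1}$-free by the same swap. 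But it changes the edge count by $m(u)-d_H(u)$, so extremality yields only $d_H(u)\ge m(u)$. It cannot exclude vertices whose missing crossing edges are exactly paid for by intra-part edges.
\item Comparing with $T_r(n)+E'$ presupposes that the intra-part edges are harmless on a complete $r$-partite host. That is precisely what the missing edges might be enforcing.
\end{itemize}
The fallback does not close the gap by itself either. The value $\ex(n,F_{k,r+1})=e(T_r(n))+f(k)$, with $f(k)=k^2-k$ or $k^2-\frac{3k}{2}$, yields $e(H)\ge f(k)+m$ (its lower bound is just the obvious construction). You would be done if $e(H)\le f(k)$, which the Chv\'atal--Hanson bound gives from $\Delta(H)\le k-1$ and $\nu(H)\le k-1$. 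But $\nu(H)\le k-1$ is not established. A centre in $V_j$ with no $H$-edges and no missing edges only sees $H-V_j$, so you get only $\sum_{i\ne j}\nu(H[V_i])\le k-1$ for each $j$. If a maximum matching of $H$ meets all $r$ parts, the only usable centres are $H$-vertices, and those are exactly the vertices that may carry missing crossing edges. Excluding this trade-off (e.g.\ showing $e(H)-m<f(k)$ whenever $m\ge 1$) is the real content of the cited characterization, and your proposal leaves it open.

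The minimum-degree step is also flawed as written. Replacing a vertex by a clone of another does not preserve $F_{k,r+1}$-freeness: if two copies of $K_{r+1}$ meet exactly in an edge $cw$, adding a clone of $w$ creates $F_{2,r+1}$ centred at $c$. Moreover, Erd\H{o}s--Stone alone bounds the number of iteratively deleted low-degree vertices only by $o(n)$, not $O(1)$. The standard repair is:
\begin{itemize}
\item run the cleaning on the graph $G'$ left after these deletions;
\item prove $e(G')\le e(T_r(|V(G')|))+O(1)$ there, which needs only your bounds $m,e(H)=O(1)$;
\item compare with the gain of roughly $\varepsilon n$ per deletion to conclude that nothing was deleted.
\end{itemize}
In the $W=\emptyset$ step you also need two scales of constants. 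A vertex of $W$ has only about $\varepsilon n$ neighbours per part, while minimum degree alone lets other vertices miss about $\varepsilon n$ vertices of a part. So the greedy clique construction must avoid a small set of vertices with many missing crossing neighbours, defined with a threshold much smaller than $\varepsilon/r$.
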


By $\chi(F_{k,r+1})=r+1$, Lemma \ref{l2} and Lemma \ref{l3}, we immediately have the following result.

\begin{theorem}\label{l4}
    Let $r\ge3,k\ge1$ be fixed integers and $n$ be sufficiently large. Suppose $G\in Ex_{ssp}(n,F_{k,r+1})$. Then there exists a subset  $E_0\subseteq E(\overline{T_{r}(n)})$ with $|E_0|=O(1)$ such that  $G \cong T_{r}(n) + E_0$.
\end{theorem}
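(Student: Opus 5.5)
The statement follows by chaining Lemma \ref{l2} and Lemma \ref{l3}. The only real work is checking that $F=F_{k,r+1}$ satisfies the two hypotheses of Lemma \ref{l2}, and then transferring the structural description from $\Ex$ to $\Exssp$.

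First, I verify $\chi(F_{k,r+1})=r+1$. Since $F_{k,r+1}$ contains $K_{r+1}$, we have $\chi\ge r+1$. For the reverse inequality, give the common vertex colour $1$ and, in each copy of $K_{r+1}$, colour its remaining $r$ vertices with colours $2,\dots,r+1$. Distinct copies share only the centre, so there are no edges between their non-central vertices and this is a proper colouring. Hence $\chi(F_{k,r+1})=r+1$, and $r+1\ge 4$ because $r\ge 3$.

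Second, I verify $\ex(n,F_{k,r+1})=e(T_r(n))+O(1)$. By Lemma \ref{l3}, for $n$ sufficiently large every $G\in\Ex(n,F_{k,r+1})$ is isomorphic to $T_r(n)+E_0$ with $|E_0|=O(1)$, so $\ex(n,F_{k,r+1})=e(G)=e(T_r(n))+|E_0|=e(T_r(n))+O(1)$. With both hypotheses in place, Lemma \ref{l2} gives $\Exssp(n,F_{k,r+1})\subseteq \Ex(n,F_{k,r+1})$ for $n$ sufficiently large.

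Finally, take $G\in\Exssp(n,F_{k,r+1})$ with $n$ at least the maximum of the thresholds from Lemma \ref{l2} and Lemma \ref{l3}. Then $G\in\Ex(n,F_{k,r+1})$, and Lemma \ref{l3} yields the required $E_0\subseteq E(\overline{T_r(n)})$ with $|E_0|=O(1)$ and $G\cong T_r(n)+E_0$.

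There is no genuine obstacle; the only subtle point is bookkeeping. The $O(1)$ bound supplied by Lemma \ref{l3} must depend only on $k$ and $r$, and both "sufficiently large" thresholds must be fixed uniformly in terms of $k$ and $r$ before $n$ is chosen.
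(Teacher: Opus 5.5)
Your proposal is correct and follows the same route as the paper: verify $\chi(F_{k,r+1})=r+1$, read off $\ex(n,F_{k,r+1})=e(T_r(n))+O(1)$ from Lemma \ref{l3}, apply Lemma \ref{l2} to get $\Exssp(n,F_{k,r+1})\subseteq\Ex(n,F_{k,r+1})$, and then apply Lemma \ref{l3} again. The paper states this in one line, and your write-up supplies the routine checks it leaves implicit.
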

  \section{Proof of Theorem \ref{t}} \label{sec-pf}
Before proving Theorem  \ref{t}, we present some important lemmas.
In the rest of this paper, we set $\alpha = \frac{2(r-1)}{r}$ and $\beta = \frac{4(r-1)}{r(r-2)}$ throughout.
    \begin{lemma}\label{l5}
        Let $r\geq 3$ be a fixed integer, and let $n$ be sufficiently large. Then $q(T_r(n))=\alpha n+O(\frac{1}{n})$.
    \end{lemma}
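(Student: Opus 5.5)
We derive the estimate directly from the closed formula in Lemma \ref{l1}. Write $n=br+s$ with $0\le s<r$, so $b=\frac{n-s}{r}$ and $b\ge 1$ once $n\ge r$. Lemma \ref{l1} gives
\[
q(T_r(n))=n-2b+\frac{n-2+\sqrt{(n-2)^2+8b(r-s)}}{2}.
\]
We expand each piece in powers of $n$, treating $r$ and $s$ as bounded constants; since $s$ takes only $r$ values, all implied constants are uniform in $s$.

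\textbf{The linear part.} We have
\[
n-2b=n-\frac{2(n-s)}{r}=\frac{r-2}{r}\,n+\frac{2s}{r},
\]
and the first half of the fraction contributes $\frac{n-2}{2}$.

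\textbf{The square root.} Put $c=8b(r-s)=\frac{8(r-s)(n-s)}{r}$, which is $\Theta(n)$. Then
\[
\sqrt{(n-2)^2+c}=(n-2)\sqrt{1+\frac{c}{(n-2)^2}}=(n-2)+\frac{c}{2(n-2)}+O\!\left(\frac{c^2}{n^3}\right).
\]
The error term is $O(1/n)$. The middle term is
\[
\frac{c}{2(n-2)}=\frac{4(r-s)(n-s)}{r(n-2)}=\frac{4(r-s)}{r}+O\!\left(\frac1n\right).
\]

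\textbf{Collecting terms.} Adding everything gives
\[
q(T_r(n))=\frac{r-2}{r}n+\frac{2s}{r}+(n-2)+\frac{2(r-s)}{r}+O\!\left(\frac1n\right).
\]
The linear terms sum to $\frac{r-2}{r}n+n=\frac{2(r-1)}{r}n=\alpha n$. The constant terms sum to
\[
\frac{2s}{r}-2+\frac{2(r-s)}{r}=\frac{2s+2r-2s-2r}{r}=0.
\]
The $s$-dependence cancels exactly, so only the $O(1/n)$ remainder survives. Hence $q(T_r(n))=\alpha n+O(\frac1n)$.

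The only place needing care is this constant-term bookkeeping: an error in the expansion would leave a spurious $O(1)$ term. The hypothesis $r\ge3$ plays no role beyond making Lemma \ref{l1} applicable.
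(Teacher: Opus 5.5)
Your proof is correct and follows essentially the same route as the paper: substitute $n=br+s$ into the closed formula of Lemma \ref{l1}, expand the square root to first order, and check that the $s$-dependent constant terms cancel. The only difference is cosmetic: you factor $(n-2)$ rather than $n$ out of the square root, which slightly simplifies the bookkeeping but changes nothing of substance.
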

	\begin{proof}
	    Let $0 \leq s < r$ and $n \equiv  s\ (\mathrm{mod}\ r)$. Then  $q(T_r(n))=n-\frac{2(n-s)}{r}+\frac{n-2+\sqrt{(n-2)^2+\frac{8(n-s)(r-s)}{r}}}{2}$ by Lemma \ref{l1}. By direct calculation, we have $(n-2)^2+\frac{8(n-s)(r-s)}{r}=n^2+(4-\frac{8s}{r})n+4-\frac{8s(r-s)}{r}$. Let $A=4-\frac{8s}{r}$ and $B=4-\frac{8s(r-s)}{r}$. Then $\sqrt{(n-2)^2+\frac{8(n-s)(r-s)}{r}}=n\sqrt{1+\frac{A}{n}+\frac{B}{n^2}}$ and $\frac{A}{n}+\frac{B}{n^2}=O(\frac{1}{n})$. Let $f(x)=\sqrt{1+x}$. Then the Taylor expansion of $f(x)$ at $x = 0$ is $f(x) = 1 + \frac{x}{2}+O(x^2)$. Thus we have \begin{align*}
	        &\sqrt{(n-2)^2+\frac{8(n-s)(r-s)}{r}}\\=&n(1+\frac{1}{2}(\frac{A}{n}+\frac{B}{n^2})+O(\frac{1}{n^2}))\\=&n+2-\frac{4s}{r}+O(\frac{1}{n}).
	    \end{align*}

        Therefore, \begin{align*}
            q(T_r(n))&=n-\frac{2(n-s)}{r}+\frac{(n-2)+(n+2-\frac{4s}{r}+O(\frac{1}{n}))}{2}\\&=\frac{2(r-1)}{r}n+O(\frac{1}{n})=\alpha n+O(\frac{1}{n})
        \end{align*}
        as desired.
	\end{proof}

	For a connected graph $G$, we know that there exists a positive eigenvector $\textbf{x}=(x_v)_{v\in V(G)}$  corresponding to $q(G)$ by the Perron-Frobenius Theorem. We refer to such a vector $\textbf{x}=(x_v)_{v\in V(G)}$ as a Perron vector of $Q(G)$.

\begin{lemma}\label{l6}
    Let $r\geq 3, t\geq0$ be fixed integers, and let $n$ be sufficiently large. Then $q(K_t\vee T_r(n-t))=q(T_r(n))+t\beta+O(\frac{1}{n})$.
\end{lemma}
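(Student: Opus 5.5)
Let $G=K_t\vee T_r(n-t)$, write $m=n-t=br+s$ with $0\le s<r$, and let $\mathbf{x}$ be the Perron vector of $Q(G)$, normalized later as convenient. The plan is to compute $q(G)$ through an equitable-type quotient and then expand asymptotically, as in the proof of Lemma \ref{l5}.

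\textbf{Step 1: reduce to a small eigenvalue problem.} By symmetry (the automorphism group permutes the $t$ clique vertices transitively, and permutes vertices within each part of $T_r(m)$), $\mathbf{x}$ is constant on the clique $K_t$, say value $z$. It is constant on each part of size $b$, say $x$, and on each part of size $b+1$, say $y$. Each clique vertex has degree $n-1$. A vertex in a part of size $b$ has degree $t+m-b$, and one in a part of size $b+1$ has degree $t+m-b-1$. The eigen-equations $q x_v=d_vx_v+\sum_{u\sim v}x_u$ then give a $3\times 3$ system, or $2\times 2$ when $s=0$:
\begin{align*}
(q-n+1)z&=(t-1)z+(r-s)bx+s(b+1)y,\\
(q-t-m+b)x&=tz+(r-s-1)bx+s(b+1)y,\\
(q-t-m+b+1)y&=tz+(r-s)bx+(s-1)(b+1)y.
\end{align*}
Then $q(G)$ is the largest root of the characteristic polynomial of this quotient matrix, by the standard fact that an equitable partition with positive eigenvector yields the spectral radius.

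\textbf{Step 2: asymptotic expansion.} Rather than solve the cubic exactly, I will perturb. Since $x$ and $y$ differ by $O(1/n)$ relative error, I first treat the parts as having size $m/r$ (error absorbed into $O(1/n)$). This gives a $2\times2$ system in $(z,w)$, where $w$ is the common part value:
\begin{align*}
(q-n-t+2)z&=mw,\\
(q-t-\tfrac{(r-1)m}{r}-\tfrac{(r-1)m}{r})w&=tz.
\end{align*}
Combining the two equations gives
\[
\Big(q-n-t+2\Big)\Big(q-t-\tfrac{2(r-1)m}{r}\Big)=tm.
\]
Set $q=\alpha n+c+O(1/n)$. The first factor is $(\alpha-1)n+O(1)=\frac{r-2}{r}n+O(1)$. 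The second factor is $c-t+\alpha t+O(1/n)$, using $m=n-t$. Matching the $\Theta(n)$ terms yields
\[
c=t-\alpha t+\frac{tr}{r-2}=t\Big(\frac{2-r}{r}+\frac{r}{r-2}\Big)=\frac{4(r-1)}{r(r-2)}\,t=t\beta.
\]
Combining this with Lemma \ref{l5} gives the claim.

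\textbf{Main obstacle.} The hardest part is making the error terms rigorous: showing that the $O(1)$ terms in the first factor and the unbalanced parts (the case $s\ne0$) only contribute $O(1/n)$ to $q$. Here is what I would try first. Define $f(q)$ as the exact characteristic polynomial of the quotient matrix, and evaluate its sign at $q=\alpha n+t\beta\pm C/n$ for a large constant $C$. Since $f$ changes sign between these two points and the largest root is isolated, with the other roots near $n-\Theta(n)$ or near $\frac{2(r-1)}{r}n-\Theta(n)$, the largest root is pinned within $O(1/n)$. Alternatively, I can compare with Lemma \ref{l1}'s exact formula by writing $q$ via a quadratic after eliminating $z$: the quantity $z$ equals $\frac{m\bar w}{q-n-t+2}$, where the denominator is $\Theta(n)$. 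This turns the clique contribution into a perturbation $t\cdot\frac{m}{q-n-t+2}+O(1/n)$ added to the Tur\'an quotient. The Tur\'an quotient's expansion then proceeds exactly as in Lemma \ref{l5}.
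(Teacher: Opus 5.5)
Your architecture is the paper's: equitable quotient, reduction to a scalar equation, asymptotic expansion. When $r\mid n-t$ your $2\times 2$ system is exact, and solving its quadratic for the larger root does give $\alpha n+t\beta+O(\frac1n)$.

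The gap is the step where, for $s\neq 0$, you replace the part sizes $b,b+1$ by $m/r$ and say the error is ``absorbed into $O(1/n)$''. That assertion is exactly what the lemma needs, and your justification does not deliver it. Changing the part sizes moves entries of the quotient matrix by $O(1)$. A generic perturbation bound then moves the top eigenvalue by $O(1)$, which would swamp the constant $t\beta$ you are trying to identify. Near-equality of $x$ and $y$ does not help by itself. The error is small only because of a first-order cancellation coming from $\sum_i(a_i-m/r)=0$, and your argument never uses it. Your two repairs are plans rather than arguments. In particular, the sign test of the cubic at $\alpha n+t\beta\pm C/n$ would need exactly this cancellation to decide the sign.

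The clean fix is the paper's: do not average. Write $x_i$ for the Perron entry on the $i$-th part (of size $a_i$) and set $X=tz+\sum_i a_ix_i$. The eigen-equations become $(q-n+2)z=X$ and $(q-n+2a_i)x_i=X$, hence
\[
1=\frac{t}{q-n+2}+\sum_{i=1}^{r}\frac{a_i}{q-n+2a_i}.
\]
The right-hand side is strictly decreasing on $(n-2,\infty)$, with slope of order $-\frac1n$ near $\alpha n$. Since $q\ge n$, $q$ is its unique root there, which also settles isolation.

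Your quadratic $(q-n-t+2)(q-t-\frac{2(r-1)m}{r})=tm$ is precisely this equation with every $a_i$ replaced by $m/r$. The missing estimate is that this replacement changes the sum by only $O(\frac1{n^2})$. With $D=q-n+\frac{2m}{r}$ and $\epsilon_i=a_i-\frac mr$, one has
\[
\frac{a_i}{D+2\epsilon_i}-\frac{m/r}{D}=\frac{(q-n)\epsilon_i}{D(D+2\epsilon_i)}.
\]
The $O(\frac1n)$ part of the sum of these terms is proportional to $\sum_i\epsilon_i=0$, so only $O(\frac1{n^2})$ remains. Dividing by the slope turns this into an $O(\frac1n)$ error in $q$. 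The paper does the same bookkeeping directly: it first extracts $q=\alpha n+O(1)$ from the same equation, then writes $a_i=\frac nr+\delta_i$ with $\sum_i\delta_i=-t$ and $q=\alpha n+c_n$.

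Your second repair also works once phrased through this identity, given $q=\alpha n+O(1)$. With $p=q-t$ it reads $\sum_i\frac{a_i}{p-m+2a_i}=1-\frac{t}{q-n+2}$. The left side equals $1$ at $p=q(T_r(m))$ and has derivative $-\frac1m(1+O(\frac1n))$ nearby. Hence $p=q(T_r(m))+\frac{tr}{r-2}+O(\frac1n)$, and Lemma~\ref{l5} applied to $m=n-t$ finishes. This route lets the exact Tur\'an formula absorb the imbalance instead of expanding the $\delta_i$ by hand.
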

\begin{proof}
    Let $T_r(n-t)=K_{a_1,\ldots,a_r}$, where $\sum\limits_{i=1}^{r}a_i=n-t$ and $|a_i-a_j|\leq 1$ for any $1\leq i,j\leq r$.

    If $t=0$, then the result is as follows by Lemma \ref{l5}. 
    
    Now we assume that $t\geq1$. Let $\textbf{x}=(x_v)_{v\in V(K_t\vee T_r(n-t))}$ be the unit Perron vector of $Q(K_t\vee T_r(n-t))$. Then we may assume that the entries of the Perron vector corresponding to the vertices  in $K_t$ are $x_0$, and the entries of the Perron vector corresponding to the vertices in the $i$-th part of $K_{a_1,\ldots,a_r}$ are $x_i$, where $i\in [r]$. Let $q\equiv q(K_t\vee T_r(n-t))$ and $X=tx_0+\sum\limits_{i=1}^{r}a_ix_i$.  Since $q\textbf{x}=Q(K_t\vee T_r(n-t))\textbf{x}$, we have 

    \begin{align}
        qx_0=(n-1)x_0+(t-1)x_0+\sum\limits_{i=1}^{r}a_ix_i\label{s1}
    \end{align}
and \begin{align}
     qx_i=(n-a_i)x_i+tx_0+\sum\limits_{j\in [r]\setminus \{i\}}a_jx_j\label{s2}
\end{align}
for each $i\in [r]$. 
From  \eqref{s1} and \eqref{s2}, we have $(q-n+2)x_0=X$ and $(q-n+2a_i)x_i=X$ for  each $i\in [r]$, respectively. Then 
    $X=tx_0+\sum\limits_{i=1}^{r}a_ix_i=(\frac{t}{q-n+2}+\sum\limits_{i=1}^{r}\frac{a_i}{q-n+2a_i})X$, 
    and thus \begin{align}
        1=\frac{t}{q-n+2}+\sum\limits_{i=1}^{r}\frac{a_i}{q-n+2a_i}.\label{s7}
    \end{align}

   By the Rayleigh quotient and Lemma \ref{ll1}, we have $\frac{\mathbf{1}^T Q(K_t\vee T_r(n-t)) \mathbf{1}}{\mathbf{1}^T\mathbf{1}} \le q \le q(K_n)$, where $\mathbf{1}$ is an all-ones vector. Then $\frac{2(r-1)}{r} n + O(1) \le q \le 2(n-1)$. Let $b_n=\frac{q}{n}$. Then $\frac{2(r-1)}{r}\le b_n\le2$. Moreover,  for any $i\in [r]$, there exists a $\delta_i=O(1)$ such that $a_i=\frac{n}{r}+\delta_i$ and $\sum\limits_{i=1}^{r}\delta_i=-t$. Then for any $i\in [r]$, we have\begin{align}
       \frac{a_i}{q-n+2a_i}=\frac{\frac{n}{r}+\delta_i}{(b_n-1+\frac{2}{r})n+2\delta_i}.\label{s8}
   \end{align}
From \eqref{s7}, \eqref{s8} and $q=b_nn$, it follows that \begin{align*}
    1&=\frac{t}{(b_n-1)n+2}+\frac{1}{r}\sum\limits_{i=1}^{r}\frac{1+\frac{r\delta_i}{n}}{b_n-1+\frac{2}{r}+\frac{2\delta_i}{n}} \nonumber\\
    &=O(\frac{1}{n})+\frac{1}{r}\sum\limits_{i=1}^{r}(\frac{1}{b_n-1+\frac{2}{r}}+O(\frac{1}{n}))\nonumber\\
    &=\frac{1}{b_n-1+\frac{2}{r}}+O(\frac{1}{n}).
\end{align*}
   Then $b_n=\frac{2(r-1)}{r}+O(\frac{1}{n})$, and thus $q=\frac{2(r-1)}{r} n+O(1)$.
   
   Let $c_n=q-\frac{2(r-1)}{r}n=O(1)$.
From $q=\frac{2(r-1)}{r}n+c_n$ and $a_i=\frac{n}{r}+\delta_i$, we have $\frac{t}{q-n+2}=\frac{t}{\frac{r-2}{r}n+c_n+2}$ and \begin{align*}
       \sum\limits_{i=1}^{r}\frac{a_i}{q-n+2a_i}&=\sum\limits_{i=1}^{r}\frac{\frac{n}{r}+\delta_i}{n+c_n+2\delta_i}\\
       &=1+\sum\limits_{i=1}^{r}\frac{\frac{r-2}{r}\delta_i-\frac{c_n}{r}}{n+c_n+2\delta_i}\\
       &=1+\sum\limits_{i=1}^{r}(\frac{\frac{r-2}{r}\delta_i-\frac{c_n}{r}}{n}+O(\frac{1}{n^2}))\\
       &=1-\frac{\frac{r-2}{r}t+c_n}{n}+O(\frac{1}{n^2}).
   \end{align*}

   Combining \eqref{s7}, it follows that \begin{align*}
       1&=\frac{t}{\frac{r-2}{r}n+c_n+2}+1-\frac{\frac{r-2}{r}t+c_n}{n}+O(\frac{1}{n^2})\\
       &=1+\frac{t}{\frac{r-2}{r}n}-\frac{\frac{r-2}{r}t+c_n}{n}+O(\frac{1}{n^2}).
   \end{align*}
   Thus $\frac{t}{\frac{r-2}{r}n}-\frac{\frac{r-2}{r}t+c_n}{n}+O(\frac{1}{n^2})=0$, which implies $c_n=\frac{4(r-1)}{r(r-2)}t+O(\frac{1}{n})$. Therefore, we have $q=\frac{2(r-1)}{r}n+\frac{4(r-1)}{r(r-2)}t+O(\frac{1}{n})=\alpha n+\beta t+O(\frac{1}{n})$.
\end{proof}

\begin{lemma}\label{l7}
    Let $r\ge3,k\ge1$ be  fixed integers and $n$ be sufficiently large. Then $ex_{ssp}(n,F_{k,r+1})=q(T_r(n))+O(\frac{1}{n})$.  
\end{lemma}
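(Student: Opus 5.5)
Since $T_r(n)$ is $F_{k,r+1}$-free (it is $K_{r+1}$-free), we have $\exssp(n,F_{k,r+1})\ge q(T_r(n))$. For the upper bound we invoke Theorem \ref{l4}. Any $G\in\Exssp(n,F_{k,r+1})$ satisfies $G\cong T_r(n)+E_0$ with $|E_0|\le C$ for a constant $C=C(r,k)$. Thus it suffices to show that adding $O(1)$ edges to $T_r(n)$ raises the signless Laplacian spectral radius by only $O(\frac1n)$. Combined with Lemma \ref{l5}, this gives the claim.

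To bound $q(T_r(n)+E_0)-q(T_r(n))$, let $H=T_r(n)+E_0$, $q=q(H)$, and let $\mathbf{x}$ be the unit Perron vector of $Q(H)$. We would first show that the entries of $\mathbf{x}$ are uniformly $O(\frac{1}{\sqrt n})$. From the eigen-equation, $(q-d_v)x_v=\sum_{u\sim v}x_u\le \sqrt{n}$ by Cauchy--Schwarz. Here $q\ge q(T_r(n))=\alpha n+O(\frac1n)$ by Lemma \ref{ll1} and Lemma \ref{l5}, while $d_v\le \frac{r-1}{r}n+C+1$. Since $\alpha>\frac{r-1}{r}$, we get $q-d_v\ge \frac{r-1}{r}n-O(1)=\Omega(n)$, and hence $x_v=O(\frac{1}{\sqrt n})$ for all $v$.

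Next we compare Rayleigh quotients, using the Perron vector of $H$ as a test vector for $T_r(n)$:
\[
q(T_r(n))\ge \mathbf{x}^TQ(T_r(n))\mathbf{x}=q-\sum_{uv\in E_0}(x_u+x_v)^2 .
\]
Since $|E_0|\le C$ and each $(x_u+x_v)^2=O(\frac1n)$, this gives $q\le q(T_r(n))+O(\frac1n)$. Together with the lower bound, $\exssp(n,F_{k,r+1})=q(T_r(n))+O(\frac1n)$.

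The main obstacle is the uniform entry bound $x_v=O(n^{-1/2})$. This step needs $q$ to exceed every degree by a linear margin, which is exactly where the gap $\alpha n-\frac{r-1}{r}n=\frac{r-1}{r}n$ is used. Everything else is routine; the only other point to check is that $C$ in Theorem \ref{l4} is independent of $n$.
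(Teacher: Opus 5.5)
Your proposal is correct and follows essentially the same route as the paper. Both use Theorem \ref{l4} to write the extremal graph as $T_r(n)+E_0$ with $|E_0|=O(1)$, bound the Perron entries by $O(n^{-1/2})$ using the eigen-equation, Cauchy--Schwarz and the linear gap $q-d_v=\Omega(n)$, and then compare Rayleigh quotients to get $q\le q(T_r(n))+\sum_{uv\in E_0}(x_u+x_v)^2=q(T_r(n))+O(\frac{1}{n})$.
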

\begin{proof}
Let $G\in Ex_{ssp}(n,F_{k,r+1})$. Then  there exists a subset  $E_0\subseteq E(\overline{T_{r}(n)})$ with $|E_0|=O(1)$ such that  $G \cong T_{r}(n) + E_0$ by Theorem \ref{l4}. Thus we have $q(G)\geq q(T_{r}(n))=\alpha n+O(\frac{1}{n})$ by Lemma \ref{l5}.

 Let $\textbf{x}=(x_v)_{v\in V(G)}$ be the unit Perron vector of $Q(G)$. Then for each $v\in V(G)$, we have\begin{align*}
    (q(G)-d_G(v))x_v=\sum\limits_{u\in N_G(v)}x_u\leq\sqrt{d_G(v)\sum\limits_{u\in N_G(v)}x^2_u}\leq \sqrt{d_G(v)}.
\end{align*}

Note that $d_G(v)= (1-\frac{1}{r})n+O(1)$ for each $v\in V(G)$. Thus $$x_v\leq \frac{ \sqrt{d_G(v)}}{q(G)-d_G(v)}\leq \frac{\sqrt{(1-\frac{1}{r})n+O(1)}}{\alpha n+O(\frac{1}{n})-(1-\frac{1}{r})n-O(1)}=O(\frac{1}{\sqrt{n}})$$ for each $v\in V(G)$. By the Rayleigh quotient, we have $q(G)=\textbf{x}^TQ(G)\textbf{x}=\textbf{x}^TQ(T_r(n))\textbf{x}+\sum\limits_{uv\in E_0}(x_u+x_v)^2\leq q(T_r(n))+O(\frac{1}{n})$.

Combining the lower and upper bounds gives $ex_{ssp}(n,F_{k,r+1})=q(T_r(n))+O(\frac{1}{n})$. 
\end{proof}

\begin{lemma}\label{l8}
    Let $r\ge3,p\ge0$ be  fixed integers and $m$ be sufficiently large. Let $H$ be a graph of order $m$ that satisfies $q(H)\leq q(T_r(m))+c+O(\frac{1}{m})$, where $c$  is a constant.  Then $q(K_p\vee H)\leq q(T_r(m+p))+c+p\beta+O(\frac{1}{m})$.
\end{lemma}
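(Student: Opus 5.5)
The plan is to induct on $p$. The case $p=0$ is the hypothesis, and $K_p\vee H=K_1\vee(K_{p-1}\vee H)$. So it suffices to prove the one-vertex step: if $q(H)\le q(T_r(m))+c+O(\frac1m)$, then
$$q(K_1\vee H)\le q(T_r(m+1))+c+\beta+O(\tfrac1m).$$
Applying this $p$ times, with the constant $c$ replaced successively by $c+\beta,\dots$ and orders $m,m+1,\dots,m+p-1$, gives the claim, since $p$ is fixed and $O(\frac{1}{m+j})=O(\frac1m)$. By Lemma \ref{l5} we have $q(T_r(m+1))=q(T_r(m))+\alpha+O(\frac1m)$. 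Hence the one-vertex step is equivalent to showing that the increment $q(K_1\vee H)-q(H)$ is at most $\alpha+\beta+O(\frac1m)=\frac{2(r-1)}{r-2}+O(\frac1m)$. If $q(H)$ is below its allowed bound, the required bound on the increment is correspondingly weaker, so it is enough to bound the increment in terms of $q(H)$ and then substitute $q(H)\le \alpha m+c+O(\frac1m)$.

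For the step, let $G=K_1\vee H$ with apex $w$, $q=q(G)$, let $\mathbf{x}$ be the unit Perron vector, and set $y=x_w^2$ and $S=\sum_{u\in V(H)}x_u$. If $q\le \alpha m$ we are done trivially, so assume $q>\alpha m$. The eigen-equation at $w$ gives $S=(q-m)x_w$. Cauchy--Schwarz gives $S^2\le m(1-y)$, hence
$$y\le \frac{m}{(q-m)^2+m}\le \frac{r^2}{(r-2)^2m}+O(\tfrac1{m^2}),$$
using $q-m\ge(\alpha-1)m=\frac{r-2}{r}m$.

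Next I use the Rayleigh quotient. Since $Q(G)$ is $Q(H)$ padded with zeros plus the contributions of the $m$ edges $wu$,
$$q=\mathbf{x}^TQ(G)\mathbf{x}\le q(H)(1-y)+\sum_{u}(x_u+x_w)^2=q(H)(1-y)+(1-y)+2x_wS+my.$$
Substituting $2x_wS=2(q-m)y$ and rearranging gives
$$q(1-2y)\le (q(H)+1)(1-y)-my,$$
that is,
$$q\le q(H)+1+y\bigl(q(H)-m\bigr)+O(\tfrac1m),$$
because $y=O(\frac1m)$ and the quantities multiplying $y^2$ are $O(m)$. If $q(H)\le m$, the $y$-term is nonpositive and the increment is at most $1+O(\frac1m)$, which suffices. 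Otherwise I plug in $q(H)-m\le\frac{r-2}{r}m+c+O(\frac1m)$ together with the bound on $y$. The increment is then at most
$$1+\frac{r^2}{(r-2)^2 m}\cdot\frac{r-2}{r}m+O(\tfrac1m)=1+\frac{r}{r-2}+O(\tfrac1m)=\frac{2(r-1)}{r-2}+O(\tfrac1m),$$
which is exactly $\alpha+\beta$, as required.

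The main obstacle is keeping the error terms honest, which is delicate because the leading constant matches exactly with no slack. First, the bound on $y$ relies on a lower bound for $q-m$, which is why I split into the cases $q\le\alpha m$ and $q>\alpha m$. Second, the division by $1-2y$ and the products $y\cdot q$ must contribute only $O(\frac1m)$. This holds because $y=O(\frac1m)$ and $q,q(H)=O(m)$, so the second-order terms $y^2m$ are $O(\frac1m)$. I would verify these expansions carefully, but I do not expect any loss in the constant.
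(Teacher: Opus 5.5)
Your proof is correct, and it takes a different route from the paper's.

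\textbf{The paper's route.} The paper does not induct on $p$. It splits the Rayleigh quotient of the unit Perron vector of $Q(K_p\vee H)$ according to the masses $X=\sum_{v\in V(K_p)}x_v^2$ and $Y=\sum_{u\in V(H)}x_u^2$. It bounds the within-block sums by $q(K_p)X$ and $q(H)Y$, and the cross term by Cauchy--Schwarz, $2\bigl(\sum_{K_p}x_v\bigr)\bigl(\sum_{H}x_u\bigr)\le 2\sqrt{pmXY}$. This makes $q(K_p\vee H)$ at most the largest eigenvalue of the $2\times2$ matrix with diagonal entries $m+2(p-1)$ and $\alpha m+c+p+O(\frac1m)$ and off-diagonal entry $\sqrt{pm}$. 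Expanding that eigenvalue gives $\alpha m+p+\frac{p}{\alpha-1}+c+O(\frac1m)$ in one shot.

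\textbf{Your route.} You reduce to a single apex vertex. You use the eigen-equation at $w$ to write the cross term exactly as $2(q-m)y$, and apply Cauchy--Schwarz only to bound $y=x_w^2$. This yields the same per-vertex increment $1+\frac{1}{\alpha-1}=\alpha+\beta$. Your $p=1$ computation can even be written exactly as
$$q\le q(H)+1+\bigl(q(H)+1-m\bigr)\frac{y}{1-2y},$$
which removes the need for any expansion.

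\textbf{What each buys.} The paper's version needs no a priori lower bound on $q$, no case split, and no tracking of uniformity of the $O(\frac1m)$ constants over $p$ induction steps. Yours makes the role of the apex vertex completely explicit.

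\textbf{One small point to fix.} Your ``trivial'' case $q\le\alpha m$ only gives the conclusion when $c\ge-(\alpha+\beta)$, because the target bound is $\alpha m+\alpha+\beta+c+O(\frac1m)$. The statement allows an arbitrary constant $c$, even though the paper only ever uses $c\ge0$. The repair is immediate:
\begin{enumerate}
\item Split at the threshold $\alpha m-K$ instead, with $K=\max\{0,-(c+\alpha+\beta)\}$.
\item If $q\le\alpha m-K$, then $q\le\alpha m+\alpha+\beta+c$, so the conclusion holds.
\item Otherwise $q-m>(\alpha-1)m-K$. Your Cauchy--Schwarz bound then still gives $y\le\frac{r^2}{(r-2)^2m}+O(\frac1{m^2})$, and the rest of your argument goes through unchanged.
\end{enumerate}
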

\begin{proof}
    When $p=0$, the conclusion is trivial, so we assume $p\ge1$.

    Let $G=K_p\vee H$ and $\textbf{x}^T=(\textbf{x}^T_{K_p}, \textbf{x}^T_{H})=(x_v)_{v\in V(G)}$ be the unit Perron vector of $Q(G)$, where $\textbf{x}_{K_p}$ and $\textbf{x}_{H}$ correspond to $K_p$ and $H$, respectively. Moreover, let $X=\sum\limits_{v\in V(K_p)}x^2_v$ and $Y=\sum\limits_{u\in V(H)}x^2_u$. Then $X+Y=1$. Clearly, we have \begin{align*}
        q(G)&=\textbf{x}^TQ(G)\textbf{x}=\sum\limits_{uv\in E(G)}(x_u+x_v)^2\\&=\sum\limits_{uv\in E(K_p)}(x_u+x_v)^2+\sum\limits_{uv\in E(H)}(x_u+x_v)^2+\sum\limits_{u\in V(K_p),v\in V(H)}(x_u+x_v)^2\\&\leq q(K_p)X+q(H)Y+(mX+pY+2\sqrt{pmXY}\\&
        =(\sqrt{X},\sqrt{Y})\begin{pmatrix}
        m+2(p-1) & \sqrt{pm} \\
        \sqrt{pm}& q(H)+p
    \end{pmatrix}\begin{pmatrix}
        \sqrt{X}\\
        \sqrt{Y}
    \end{pmatrix}.
    \end{align*}
By Lemma \ref{l5} and  $q(H)\leq q(T_r(m))+c+O(\frac{1}{m})$, we have $q(H)\leq \alpha m+c+O(\frac{1}{m})$.  Let $A=m+2(p-1), B=\alpha m+c+p+O(\frac{1}{m})$ and $S=\begin{pmatrix}
       A & \sqrt{pm} \\
        \sqrt{pm}& B
    \end{pmatrix}$.
 Then $q(G)\leq \lambda_1(S)$. By direct calculation, we have \begin{align*}
     \lambda_1(S)&=\frac{A+B+\sqrt{(A-B)^2+4pm}}{2}\\
     &=\frac{A+B+\sqrt{(\alpha-1)^2m^2-2(\alpha-1)m(p-2-c)+4pm+O(1)}}{2}\\
     &=\frac{A+B+\sqrt{((\alpha-1)m+\frac{2p}{\alpha-1}-(p-2-c)+O(\frac{1}{m}))^2}}{2}\\
     &=\alpha m+p+\frac{p}{\alpha-1}+c+O(\frac{1}{m}).
 \end{align*}
    Combining $q(G)\leq \lambda_1(S)$ and Lemma \ref{l5}, we have  $q(G)\leq q(T_r(m+p))+c+p\beta+O(\frac{1}{m})$.
    \end{proof}

\begin{definition}\label{d1}
Let $G$ be a graph, let $t \ge 1$, and let $r \ge 3$ be fixed. 
Define $\mathcal{M}_t(G)$  as the family of all collections
$M=\{\, V(B_1), V(B_2), \dots, V(B_t) \} $, $U(M)=\bigcup\limits_{V(B)\in M}V(B)$ and $C_t(G)=\bigcap\limits_{M\in \mathcal{M}_t(G)}U(M)$ 
,where $B_1, \dots, B_t$ are pairwise vertex‑disjoint copies of $K_{r+1}$ in $G$. 
\end{definition}

\begin{lemma}\label{l9}
    Let $t\geq1, r\ge3$ be fixed integers and $G$ be a $(t+1)K_{r+1}$-free graph. If $\mathcal{M}_t(G)\neq \emptyset$ and $C_t(G)=\emptyset$, then $G$ is $F_{t(r+1)+1,r+1}$-free.
\end{lemma}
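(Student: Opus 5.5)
Suppose, for contradiction, that $G$ contains a copy $F$ of $F_{t(r+1)+1,r+1}$ with centre $w$, made of $k:=t(r+1)+1$ copies $D_1,\dots,D_k$ of $K_{r+1}$ that pairwise share only $w$. Since $\mathcal{M}_t(G)\neq\emptyset$ and $C_t(G)=\emptyset$, the vertex $w$ lies outside $C_t(G)$. By the definition of $C_t(G)$, there is then some $M\in\mathcal{M}_t(G)$ with $w\notin U(M)$. I will fix such an $M=\{V(B_1),\dots,V(B_t)\}$. The plan is to locate one $D_j$ that is vertex-disjoint from $U(M)$. Then $B_1,\dots,B_t,D_j$ are $t+1$ pairwise vertex-disjoint copies of $K_{r+1}$ in $G$, which contradicts $(t+1)K_{r+1}$-freeness.

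The key step is a counting argument. The sets $V(D_j)\setminus\{w\}$, for $j\in[k]$, are pairwise disjoint. Also $|U(M)|=t(r+1)$, and $w\notin U(M)$. So each vertex of $U(M)$ lies in at most one of the sets $V(D_j)\setminus\{w\}$, and hence at most $t(r+1)$ indices $j$ have $V(D_j)\cap U(M)\neq\emptyset$. Because $k=t(r+1)+1>t(r+1)$, some index $j$ satisfies $V(D_j)\cap U(M)=\emptyset$, which already uses $w\notin U(M)$. This $D_j$ gives the contradiction described above.

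There is no real obstacle here. The one point that needs care is that the definition of $C_t(G)$ as an intersection over all of $\mathcal{M}_t(G)$ only yields, for the single vertex $w$, an $M$ avoiding $w$. That is exactly what the argument needs: we never require a single $M$ that avoids several vertices at once. The hypothesis $\mathcal{M}_t(G)\neq\emptyset$ matters only so that the intersection is over a nonempty family, so that $C_t(G)=\emptyset$ carries content.
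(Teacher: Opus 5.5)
Your proof is correct and matches the paper's argument. You choose $M\in\mathcal{M}_t(G)$ avoiding the centre, which is possible because $C_t(G)=\emptyset$. You then pigeonhole the $t(r+1)+1$ pairwise disjoint sets $V(D_j)\setminus\{w\}$ against the $t(r+1)$ vertices of $U(M)$, which yields a $(t+1)$-st clique disjoint from $B_1,\dots,B_t$.
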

\begin{proof}
Suppose to the contrary that $G$ contains a copy of $F_{t(r+1)+1,r+1}$, say $F$. Let $v\in V(F)$ such that $d_F(v)=|V(F)|-1$, and let $Q_1,\ldots,Q_{t(r+1)+1}$ be $t(r+1)+1$ distinct  copies of $K_{r+1}$ in  $F$. Moreover, let $V(Q_i)=\{v\}\cup A_i$ for each $i\in [t(r+1)+1]$. Then $|A_i|=r$ and $A_i\cap A_j=\emptyset$ for $1\leq i\neq j\leq t(r+1)+1$. 

Since $C_t(G)=\emptyset$, there exists $M_1\in \mathcal{M}_t(G)$ such that $v\notin V(B)$ for any $V(B)\in M_1$. Without loss of generality, suppose that $M_1=\{V(B_1),\ldots,V(B_t)\}$, where $B_1,\ldots,B_t$ are $t$ pairwise vertex-disjoint copies of $K_{r+1}$. Then $|\bigcup\limits_{i=1}^{t}V(B_i)|=t(r+1)$, and thus there exists $j\in [t(r+1)+1]$ such that  $(\bigcup\limits_{i=1}^{t}V(B_i))\cap A_j=\emptyset$. Since $v \notin \bigcup_{i=1}^{t} V(B_i)$, the copies $B_1, \dots, B_t, Q_j$ are $t+1$ pairwise vertex-disjoint copies of $K_{r+1}$ in $G$, contradicting that $G$ is $(t+1)K_{r+1}$-free.

This completes the proof.
\end{proof}

The following conclusion is so obvious that we omit its proof.
\begin{Observation}\label{o1}
    Let $r\geq 3$,  $t\geq0$, $k\geq t+1$, and  $n\ge k(r+1)$ be fixed integers. Let $G$ be a graph of order $n$.  Suppose that $G=K_t\vee H$, where $H$ is $(k-t)K_{r+1}$-free. Then $G$ is $kK_{r+1}$-free.
\end{Observation}

 \begin{lemma}\label{ll3}
    Let $r\geq 3$, $t\geq0$, and $n\ge (t+1)(r+1)$ be fixed integers, and let  $G\in Ex_{ssp}(n,(t+1)K_{r+1})$. Then $G$ is connected.
 \end{lemma}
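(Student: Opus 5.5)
We argue by contradiction. Suppose $G\in Ex_{ssp}(n,(t+1)K_{r+1})$ is disconnected. First we find a component that carries the spectral radius, and then we rebuild $G$ into a connected $(t+1)K_{r+1}$-free graph with strictly larger $q$, using Lemma \ref{ll1}.

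Since $G$ is disconnected, its vertices split into components $G_1,\ldots,G_s$ with $s\ge 2$. The matrix $Q(G)$ is block diagonal, so $q(G)=\max_i q(G_i)$. Let $G_1$ be a component attaining this maximum. Let $t_i$ be the largest number of pairwise vertex-disjoint copies of $K_{r+1}$ in $G_i$; then $\sum_i t_i\le t$.

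The plan is to join everything to $G_1$ through a single vertex. Pick a vertex $u\in V(G_1)$ and add edges from $u$ to exactly one vertex in each other component, so that the new graph $G'$ is connected. Every added edge is a bridge, so none of them lies in a clique of size at least $3$. Hence every copy of $K_{r+1}$ in $G'$ (recall $r\ge 3$) lies inside a single $G_i$. The maximum number of disjoint copies of $K_{r+1}$ therefore stays $\sum_i t_i\le t$, and $G'$ is still $(t+1)K_{r+1}$-free. Moreover, $G_1\vee$ nothing, i.e.\ $G_1$ itself, is a proper subgraph of the connected graph $G'$. By Lemma \ref{ll1}, $q(G')>q(G_1)=q(G)$, which contradicts the maximality of $G$.

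The only case needing care is when $G_1$ consists of a single vertex, but the argument still applies. Because $n\ge 2$, $G'$ is connected, and a single vertex is a proper subgraph of $G'$ with $q=0<q(G')$. Equally, we can simply note that adding any bridge keeps the graph $(t+1)K_{r+1}$-free. The remaining step to verify is that Lemma \ref{ll1} applies as stated: $G_1$ is a proper subgraph of the connected graph $G'$, because $G'$ has extra vertices and edges. The main point requiring attention is that bridges cannot create new cliques; this holds because $K_{r+1}$ with $r\ge 3$ is 2-connected. The hypothesis $n\ge (t+1)(r+1)$ is not actually needed for this argument; it only guarantees that the extremal problem is non-trivial.
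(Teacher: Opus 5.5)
Your proof is correct and is essentially the paper's argument. Both take the component $G_1$ attaining $q(G)$, join it to the rest by bridges, note that $G'$ stays $(t+1)K_{r+1}$-free, and apply Lemma \ref{ll1}. The paper adds a single edge to one other component and compares $G_1\cup G_2$ with $G_1\cup G_2+e$, whereas you add enough bridges to make $G'$ connected and compare $G_1$ with $G'$ directly. Your remark that a bridge cannot lie in a $K_{r+1}$ with $r\ge 3$ makes explicit a step the paper leaves unstated.
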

 \begin{proof}
 Suppose to the contrary that $G$ is disconnected.   Let $G_1$ and $G_2$ be two connected components of $G$ with $q(G_1)=q(G)$, and let $G'$ be the graph obtained from $G$ by adding an edge $e$ between $G_1$ and $G_2$.  Then $G'$ is $(t+1)K_{r+1}$-free and $q(G')=q(G_1\cup G_2+e)> q(G_1\cup G_2)=q(G)$ by Lemma \ref{ll1}, which contradicts $G\in Ex_{ssp}(n,(t+1)K_{r+1})$.
 \end{proof}

    \noindent\textbf{\textit{Proof of Theorem \ref{t}.}} We proceed by induction on $t$.

    If $t=0$, then the result holds by Lemma \ref{ll2}. Assume $t \ge 1$ and that the result holds for all non-negative integers less than $t$. We now prove that the result also holds for $t$, thereby completing the proof.

    Clearly, $ K_t\vee T_r(n-t)$ is $(t+1)K_{r+1}$-free. So $q(G)\ge q(K_t\vee T_r(n-t))=q(T_r(n))+t\beta+O(\frac{1}{n})$ since  $G\in Ex_{ssp}(n,(t+1)K_{r+1})$ and Lemma \ref{l6}. Furthermore, $G$ is connected by Lemma \ref{ll3}.

    \noindent\textbf{Claim 1.} For each $p\in\{0\}\cup[t]$, there exists a vertex set $V_p\subseteq V(G)$ with $|V_p|=p$ such that  $H_p=G- V_p$  is $(t+1-p)K_{r+1}$-free and $G=K_p\vee H_p$.
    \begin{proof}
    We proceed by induction on $p$.
    
        If $p=0$, then the claim is trivial. Fix $p\in \{0,\ldots,t-1\}$. 
        Assume that the claim holds for  $p$. We now prove that the claim also holds for $p+1$. 

       Set $s=t+1-p\ge2$ and $n_0=|V(H_p)|=n-p$. By the induction hypothesis, there exists a vertex set $V_p\subseteq V(G)$ with $|V_p|=p$ such that  $H_p=G-V_p$ is $sK_{r+1}$-free and $G=K_p\vee H_p$.  Next we prove that there exists a vertex $u \in V(H_p)$ such that $H_p-\{u\}$ is $(s-1)K_{r+1}$-free and $d_G(u) = n-1$.

       We claim that there exist $s-1$ pairwise vertex-disjoint copies of $K_{r+1}$ in $H_p$. Otherwise,  $H_p$ is $(s-1)K_{r+1}$-free. By the induction hypothesis on $t$ and Lemma \ref{l6}, we have $q(H_p)\leq q(K_{s-2}\vee T_r(n_0-s+2))=\alpha n_0+(s-2)\beta+O(\frac{1}{n_0})$, and thus $q(G)=q(K_p\vee H_p)\leq q(T_r(n))+(t-1)\beta+O(\frac{1}{n})$ by Lemma \ref{l8}, which contradicts $q(G)\ge q(T_r(n))+t\beta+O(\frac{1}{n})$. Combining Definition \ref{d1}, we have $\mathcal{M}_{s-1}(H_p)\neq \emptyset$.

Next we prove that $C_{s-1}(H_p)\neq\emptyset$. Otherwise, $C_{s-1}(H_p)=\emptyset$, and thus $H_p$ is $F_{(s-1)(r+1)+1,r+1}$-free by Lemma \ref{l9}. By Lemma \ref{l7} and Lemma \ref{l8}, we have $q(G)=q(K_p\vee H_p)\leq q(T_r(n))+p\beta+O(\frac{1}{n})$, which contradicts $q(G)\ge q(T_r(n))+t\beta+O(\frac{1}{n})$.

Without loss of generality, let $u\in C_{s-1}(H_p)$. Then $H_p-\{u\}$ is $(s-1)K_{r+1}$-free.

Next we prove that $d_G(u) = n-1$. Otherwise, we add all missing edges between $u$ and $H_p-\{u\}$, and denote the resulting graph by $H^+_p$. Then $K_p\vee H^+_p\cong K_{p+1}\vee (H_p-\{u\})$ is $(t+1)K_{r+1}$-free by Observation \ref{o1} and $s=t+1-p$. However, by Lemmas \ref{ll3} and \ref{ll1}, we have $q(K_p\vee H^+_p) > q(K_p\vee H_p) = q(G)$, which contradicts the fact that $G \in \mathrm{Ex}_{ssp}(n, (t+1)K_{r+1})$.

Therefore, $G=K_{p+1}\vee (H_p-\{u\})$. Let $V_{p+1}=V_p\cup \{u\}$ and $H_{p+1}=H_p-\{u\}$. Then $H_{p+1}$ is $(t-p)K_{r+1}$-free. This completes the proof of Claim 1.
    \end{proof}

By Claim 1, there exists a vertex set $V_t\subseteq V(G)$ with $|V_t|=t$ such that  $H_t=G- V_t$  is $K_{r+1}$-free and $G=K_t\vee H_t$. Repeatedly applying Lemma \ref{ll6}, together with $G \in \mathrm{Ex}_{ssp}(n, (t+1)K_{r+1})$, we obtain that $H_t$ is a complete $r'$-partite graph, where $r' \le r$. For sufficiently large $n$, there exist two vertices $u$ and $v$ in the same part of $H_t$. If $r'<r$, then $H_t+uv$ is $K_{r+1}$-free, and thus $G+uv$ is $(t+1)K_{r+1}$-free. Clearly, we have $q(G+uv)>q(G)$, which contradicts the fact that $G \in \mathrm{Ex}_{ssp}(n, (t+1)K_{r+1})$. 
Thus we have $r'=r$. Without loss of generality, let $H_t\cong K_{b_1,\ldots,b_r}$ and $G\cong K_t\vee K_{b_1,\ldots,b_r}$, where $\sum\limits_{i=1}^{r}b_i=n-t$ and $b_i\geq1$ for each $i\in[r]$. By the proof of  Lemma \ref{l6}, we have \begin{align}
    1=\frac{t}{q(G)-n+2}+\sum\limits_{i=1}^{r}\frac{b_i}{q(G)-n+2b_i}.\label{s4}
\end{align}

Now we claim that $G\cong K_t\vee T_r(n-t)$. Otherwise, there exist $i,j\in [r]$ such that $b_i-b_j\geq 2$. Without loss of generality, let $i=1,j=2$,  $f(x)=\frac{x}{q(G)-n+2x}$ and $x>0$. Then we have $f'(x)=\frac{q(G)-n}{(q(G)-n+2x)^2}$ and $f''(x)=\frac{-4(q(G)-n)}{(q(G)-n+2x)^3}<0$, and thus $f(x)$ is a concave function. This implies \begin{align}
    f(b_1-1)+f(b_2+1)>f(b_1)+f(b_2).\label{s5}
\end{align} Let $G'\cong K_t\vee K_{b_1-1,b_2+1,b_3,\ldots,b_r}$. Then $G'$ is $(t+1)K_{r+1}$-free and 
\begin{align}
    1=\frac{t}{q(G')-n+2}+\frac{b_1-1}{q(G')-n+2(b_1-1)}+\frac{b_2+1}{q(G')-n+2(b_2+1)}+\sum\limits_{i=3}^{r}\frac{b_i}{q(G')-n+2b_i}.\label{s6}
\end{align}
From equations \eqref{s4}, \eqref{s5} and \eqref{s6}, we have $q(G')>q(G)$, which contradicts the fact that $G \in \mathrm{Ex}_{ssp}(n, (t+1)K_{r+1})$.

Therefore, we have $G\cong K_t\vee T_r(n-t)$. $\hfill\square$
    
	\section*{\bf Funding}
	
 This work is  supported by the National Natural Science Foundation of China (Grant Nos. 12371347, 12271337).
	
	\section*{Declarations}
	
	\noindent\textbf{Conflict of interest}\  The authors declare that they have no conflict of interest.
	
	\vskip 0.5em
	
	\noindent\textbf{Data availability} \  No data was used for the research described in the article.

	\end{spacing}
\end{document}